\numberwithin{equation}{section} \hyphenation{semi-stable}
\definecolor{ffzzqq}{rgb}{1,0.6,0}
\definecolor{qqqqff}{rgb}{0,0,1}
\definecolor{ffqqqq}{rgb}{1,0,0}
\definecolor{wwzzqq}{rgb}{0.4,0.6,0}
\definecolor{zzwwff}{rgb}{0.6,0.4,1}
\definecolor{ttqqqq}{rgb}{0.2,0,0}
\pgfplotsset{compat=1.15}
\newcommand{\CC}{\mathbb{C}}
\newcommand{\mA}{\mathbb{A}}
\newcommand{\NN}{\mathbb{N}}
\newcommand{\ZZ}{\mathbb{Z}} 
\newcommand {\PP}{\mathbb{P}}
\newcommand {\mP}{\mathbb{P}}
\newcommand {\sI}{\mathcal{I}}
\newcommand {\sO}{\mathcal{O}}
\newcommand {\sL}{\mathcal{L}}
\newcommand {\sC}{\mathcal{C}}
\newcommand {\sE}{\mathcal{E}}
\newcommand {\sF}{\mathcal{F}}
\newcommand {\sG}{\mathcal{G}}
 \DeclareMathOperator{\Proj}{Proj}
 \def\cocoa{{\hbox{\rm C\kern-.13em
   o\kern-.07em C\kern-.13em o\kern-.15em A}}}
\newtheorem{theorem}{Theorem}[section]
\newtheorem{lemma}[theorem]{Lemma}
\newtheorem{proposition}[theorem]{Proposition}
\newtheorem{corollary}[theorem]{Corollary}
\newtheorem{question}[theorem]{Question}
\newtheorem{conjecture}[theorem]{Conjecture} \theoremstyle{definition}
\newtheorem{definition}[theorem]{Definition} \theoremstyle{remark}
\newtheorem{remark}[theorem]{Remark}
\newtheorem{example}[theorem]{Example}
\definecolor{MyDarkGreen}{cmyk}{0.7,0,1,0}
\begin{document}

\title[Quasi-homogeneous singularities of free and nearly free plane curves]{A characterization of quasi-homogeneous singularities of free and nearly free plane curves}
 \author[A. V. Andrade]{Aline V. Andrade}
 \address{ICEx - UFMG, Department of Mathematics, Av. Ant\^onio Carlos, 6627, 30123-970 Belo Horizonte, MG, Brazil}
 \email{andradealine@mat.ufmg.br, ORCID 0000-0001-7129-3953}
 \author[V. Beorchia]{Valentina Beorchia} 
 \address{Dipartimento di Matematica, Informatica e Geoscienze, Universit\`a di
Trieste, Via Valerio 12/1, 34127 Trieste, Italy}
 \email{beorchia@units.it, 
 ORCID 0000-0003-3681-9045}
 \author[R.\ M.\ Mir\'o-Roig]{Rosa M.\ Mir\'o-Roig} 
 \address{Facultat de
 Matem\`atiques i Inform\`atica, Universitat de Barcelona, Gran Via des les
 Corts Catalanes 585, 08007 Barcelona, Spain} \email{miro@ub.edu, ORCID 0000-0003-1375-6547}

\thanks{The first author is supported by CAPES/Print grant number 88887.913035/2023-00 and partially supported by CNPq universal grant 408974/2023-0.}
\thanks{The second author is a member of GNSAGA of INdAM, is supported by MUR funds: PRIN project GEOMETRY OF ALGEBRAIC STRUCTURES: MODULI, INVARIANTS, DEFORMATIONS, PI Ugo Bruzzo, Project code 2022BTA242, and by the University of Trieste project FRA 2024.} 
\thanks{The third author has been partially supported by the grant PID2020-113674GB-I00}

\thanks{{\bf 2020 Mathematics Subject Classification} Primary 14H50; Secondary 14B05, 13C40, 13D02, 32S22, 15A69.}

\medskip \noindent
\thanks{{\bf Keywords}. Singular plane curve, Milnor number, Tjurina number, quasi-homogeneous singularities, free curves, nearly free curves}

\begin{abstract} The goal of this paper is to establish a new characterization of quasi-homogeneous isolated singularities of free curves and nearly free curves $C$ in $\PP_\CC^2$. The criterion will be in terms of a first syzygy matrix associated with the Jacobian ideal $J_f$ of $f$, where $f=0$ is the equation of the plane curve $C$.
\end{abstract}

\maketitle

\section{Introduction}

The present paper concerns the question whether a singular point of a reduced planar curve $V(f) \subset \PP^2: =\PP_\CC^2$ is weighted homogeneous or not, that is when an isolated singular point can be locally represented by a weighted homogeneous polynomial. We recall that
a polynomial $f\in \CC[x_1,x_2]$ is called
{\it weighted homogeneous} if there exist positive rational numbers 
$w_1,w_2$ and $\delta$ such that, for each monomial $x_1^{a_1}x_2^{a_2}$ appearing in $f$ with nonzero
coefficient, one has $a_1 w_1 +a_2w_2=\delta$. The number $\delta$ is called the weighted homogeneous degree of $f$ with respect to the weights $w_1,w_2$.
A celebrated result by Saito (see \cite{S})
gives the following characterization of weighted homogeneous singularities.

Let $g = g(x_1,x_2)$ be a holomorphic function in a neighborhood of the origin in 
$\CC^2$ defining an isolated singularity at $(0,0)$. The following conditions are equivalent:
\begin{enumerate}
\item there is a holomorphic coordinate transformation mapping $g$ to a weighted homogeneous polynomial;
\item $g(x_1,x_2)$ is quasi-homogeneous, i.e., there exist holomorphic local germs \[a_1(x_1,x_2),a_2(x_1,x_2)\] 
at $(0,0)$ such that
$g(x_1,x_2)=a_1(x_1,x_2)\frac{\partial g}{\partial x_1}+ a_2(x_1,x_2)\frac{\partial g}{\partial x_2}$;
\item it holds $\mu_{(0,0)}(g)=\tau_{(0,0)}(g)$, where $\mu$ and $\tau$ are the Milnor and the Tjurina numbers of the isolated singularity
defined by $g$ (see Definition \ref{def_tau_mu}).
\end{enumerate}

The definition and the characterization above turn out to be difficult to test in concrete cases, especially for singular points of high multiplicity.
In \cite{YZ} the following problem is addressed: How many different ways can the weighted homogeneous isolated hypersurface singularities of a polynomial be recognized? 

The paper \cite{YZ} illustrates nine different ways for such a characterization, reflecting the fact that this topic plays an important role in singularity theory.

Here we introduce an additional characterization, which concerns singularities of free and nearly free plane curves. We recall that a projective plane curve $V(f)$, where $f\in R:=\CC[x_0,x_1,x_2]$, is said to be free if, by setting $J_f=\langle \partial_{x_0} f, \partial_{x_1} f, \partial_{x_2} f \rangle$ to be
the Jacobian ideal of $f$,
the graded $R$-module ${\rm Syz}(J_f)$
of all Jacobian relations for $f$ is a free $R$-module. This is equivalent to saying that the
projective dimension ${\rm pd} (R/J_f) =2$; by the 
Hilbert Syzygy Theorem, see \cite[Corollary 19.7]{e}, and the Auslander-Buchsbaum formula, see \cite[Theorem 19.9]{e}, this is equivalent to 
the Jacobian ideal being saturated. A plane curve is called nearly free (see \cite[Definition 2.1]{DS5}) if the module $N(f)=\frac{I_f}{J_f}$, where $I_f$ is the saturation of $J_f$, satisfies $N(f) \neq 0$ and $\dim N(f)_s \le 1$ for any $s \in \ZZ$. The study of free curves is a very active area of research, also in connection with Terao's conjecture for line arrangements, which says that the freeness of a line arrangement only depends on the combinatorics of the intersection lattice (see \cite[Conjecture 4.138]{OT}). Moreover, by \cite[Corollary 1.2 and Theorem 1.3]{D}, free curves admitting a Jacobian syzygy in degree $r < \frac{d}{2}$ are characterized by attaining the maximal bound $\tau(d,r)_{max}$
of Du Plessis and Wall for the global Tjurina number $\tau(f)$, and nearly free curves
with $r \le \frac{d}{2}$ satisfy $\tau(f) = \tau(d,r)_{max} -1$, where $r$ denotes the minimal degree of a non-trivial Jacobian relation, see Proposition \ref{bounds_tau}.

The criterion we prove involves the global Jacobian singular scheme, that is the scheme defined by the Jacobian ideal $J_f$, and relies on a first syzygy matrix appearing in a minimal free resolution of $J_f$. Our main result is the following:

\begin{theorem}
 Let $C=V(f) \subset \PP^2$ be a free, respectively nearly free, curve of degree $d > 2r$, respectively $d \ge 2r$, where $r=mdr(f)$ is the minimal degree of a Jacobian syzygy (see Definition \ref{def_mdr}). Moreover, let $M_f$ be a first syzygy matrix in a minimal free resolution of the Jacobian ideal $J_f$.

 Then a singular point $p \in {\rm Sing}\ C$ is quasi-homogeneous if and only if ${\rm rk}\ M_f(p) \ge 1$.
 \end{theorem}

Our techniques are geometric; indeed, we consider the polar map
$\nabla f: \PP^2 \dasharrow \PP^2$, defined by the three partials of $f$, and the surface $S_f$ given by closure of its graph in $\PP^2 \times \PP^2$, or in other words the blow-up of the Jacobian scheme. It turns out that such a surface is contained in a locus
$Z_f$ with equations determined by the columns of $M_f$, and the quasi-homogeneity of the singular points is equivalent to the irreducibility of $Z_f$. 

We point out that our criterion is very simple to test; indeed, a first syzygy matrix is easily found by many symbolic computational programs, like Macaulay 2 \cite{M2} or Singular \cite{Sing}. Moreover, we can check if all the singular points of a given curve are quasi-homogeneous just by checking if the zero set of the ideal generated by the entries of $M_f$ is empty, that is its radical is irrelevant. Such a test can be done again with the aid of Macaulay 2, without explicitly computing the singular points. 

Furthermore, our result allows us to interpret the locus of first syzygy matrices of free curves admitting some non quasi-homogeneous singularity as a 
closed subscheme of the space of matrices.
Indeed, 
 the condition ${\rm rk}\ M_f=0$ is a closed condition on the coefficients of the entries of 
 $M_f$. Moreover, such a closed locus is proper, since there exist free line arrangements in any degree, and it is known that line arrangements have only quasi-homogeneous singularities (see \ref{eq: ex of free with only qh}), thus with
 nowhere vanishing matrix $M_f$.

We point out that all known characterizations are of {\it local} type, meaning that to apply them we need to know exactly the coordinates of a singular point and a local equation of the curve in such a point, while our approach allows to obtain information on the nature of the singularities by looking globally to the Jacobian singular scheme, so we think that this gives a new perspective to such a study. The investigation of a similar criterion for other classes of singular curves is a challenging question and deserves further study. 
In the case of $3$-syzygy curves, that is curves admitting exactly $3$ independent Jacobian syzygies, we conjecture that the same result as in the nearly free case holds, see Conjecture \ref{conj}. This expectation is motivated by a large number of numerical experiments.

Finally, we observe that our approach is related to the notions of Rees algebra $R(J_f)=\bigoplus _{n=0}^\infty J_f ^n$ and of symmetric algebra $S(J_f)=\bigoplus _{n=0}^\infty {\rm Sym}^n J_f$ of the Jacobian ideal
$J_f$. Indeed, we can interpret $R(J_f)$ as the coordinate ring of the blow-up of the Jacobian scheme, thus of the closure $S_f \subset \mP^2 \times \mP^2$ of the graph of the polar map, and $S(J_f)$ as the coordinate ring of
the surface $Z_f\subset \mP^2 \times \mP^2$ determined by a first syzygy matrix, see for instance \cite[Section 1.2]{ST2}. It has been proved in \cite[Lemma 3.1 and Corollary 3.2]{NS}, that $R(J_f) \cong S(J_f)$, and hence $S_f=Z_f$, if and only if the matrix
$M_f$ never vanishes; the property $R(J_f) \cong S(J_f)$ is referred as $J_f$ being of {\it linear type}. A rich research area concerns the question whether $R(J_f)$ and $S(J_f)$ are Cohen-Macaulay, see for instance \cite{BMNR}, \cite{BS}, \cite{Po}, and \cite{R}, just to quote a few of them. Our results can be translated as follows: we prove that the symmetric algebra of the Jacobian ideal of a free or nearly free curve is always Cohen-Macaulay, and that the Jacobian ideal is of linear type if and only if all the singularities are quasi-homogeneous.

We conclude this circle of comments by observing that, as far as we know, Rees and symmetric algebras have been involved in the study of Jacobian ideals only for families of curves with all quasi-homogeneous singularities, for instance in \cite {ST2} and \cite{T}. Thus our results give a new insight also in such a framework.

Now we illustrate
the organization of the paper. In Section 2 we fix the notation and the preliminary results. In Section 3 we prove our result in the case of free curves (see Theorem \ref{main1}), and in the next one, by an appropriate use of the de Rham sequence, we give a description of first syzygy matrices of free curves. In Section 5 we prove our result for nearly free curves (see Theorem \ref{main2}) and we describe a first syzygy matrix of nearly free curves with a linear Jacobian syzygy of a particular type (see Proposition \ref{1st_syz_NF}). In Section 6 we apply our criterion to several examples, detecting all non quasi-homogeneous singularities of conic-line arrangements with a linear Jacobian syzygy, which have been classified in \cite{BMR}. Finally, in Section 7 we conjecture that the same result holds for $3$-syzygy curves.
\vskip 4mm
\noindent
 {\bf Acknowledgement.} 
Most of this work was done while the first and third authors were visiting the Universit\`a di Trieste, and they would like to thank the Department of Mathematics, Computer Science and Geosciences for their warm hospitality. The authors thank the anonymous referee for useful comments. 
 
\section{Preliminaries } 
This section contains the basic definitions and results on Jacobian ideals associated with reduced singular plane curves and it lays the groundwork for the results in the later sections.

\vskip 2mm

\subsection{Jacobian ideal of a reduced curve}
From now on, we fix the polynomial ring $R=\CC[x_0,x_1,x_2]$ and we denote by $C =V(f)$ a reduced plane curve of degree $d$ defined by a homogeneous polynomial $f\in R_d$ in the
complex projective plane $\PP^2=\Proj(R)$.

\begin{definition} \label{jabscheme}
 The {\em Jacobian ideal} $J_f$ of 
a reduced singular plane curve $C =V(f)$ of degree $d$ is defined as the homogeneous ideal in $R$ generated by the 3 partial derivatives
$\partial_0 f:=\partial_{x_0} f$, $\partial_1 f:=\partial_{x_1} f$ and $\partial_2 f:=\partial_{x_2} f$. The {\em Jacobian scheme} $\Sigma_f$ of $C=V(f) \subseteq \PP^2$ is the zero-dimensional scheme defined by the homogeneous ideal $J_f$.
\end{definition}

We denote by ${\rm Syz}(J_f)$ the graded $R$-module of all Jacobian relations for $f$, i.e.,
$$
{\rm Syz}(J_f):=\{(a,b,c)\in R ^3\mid a\partial_0 f + b\partial_1 f+ c\partial_2 f = 0 \};
$$
and by ${\rm Syz}(J_f)_t$ the homogeneous part of degree $t$ of the graded $R$-module ${\rm Syz}(J_f)$. For any $t \ge 0$, we have that ${\rm Syz}(J_f)_t$ is a $\CC$-vector space of finite dimension.

\begin{definition} \label{def_mdr}
The {\em minimal degree of a Jacobian syzygy} for $f$ is the integer ${\rm mdr}(f)$ defined to be the smallest integer such that there
is a nontrivial relation
$ a\partial_0 f + b\partial_1 f + c\partial_2 f= 0$ 
with coefficients $a, b, c \in R_r$. More precisely, we have:
\begin{equation}
 \label{mdr}
{\rm mdr}(f):=min\{ n\in \NN \mid {\rm Syz}(J_f)_n\ne 0\}.
\end{equation}
\end{definition}

It is well known that ${\rm mdr}(f) = 0$, i.e. the three partials
$\partial_0 f$, $\partial_1 f$ and $\partial_2 f$ are linearly dependent, if and only if $C$ is
the union of lines passing through one point $p\in \PP^2$. So we will always assume that ${\rm mdr}(f)>0$.

\begin{definition}\label{def: free curves}
Let $C =V(f)$ be a reduced singular plane curve of degree $d$. We say that $C$ is {\em free} if the graded $R$-module ${\rm Syz}(J_f)$ of all Jacobian relations for $f$ is a free $R$-module, i.e. 
\begin{equation}\label{free}
{\rm Syz}(J_f)=R(-d_1)\oplus R(-d_2)
\end{equation}
with $d_1+d_2=d-1$. In this case $(d_1,d_2)$ are called the {\it exponents} of $C$.

We say that $C=V(f)$ is {\em nearly free} if 
if the module $N(f)=\frac{I_f}{J_f}$, where $I_f$ is the saturation of $J_f$, satisfies $N(f) \neq 0$ and $\dim N(f)_s \le 1$ for any $s \in \ZZ$. 
By \cite[Theorem 2.2]{DS5} and 
\cite[Definition 2.2]{D}, a curve $V(f)$ is nearly free if and only if
the minimal free resolution of ${\rm Syz}(J_f)$ looks like:
\begin{equation}\label{nearly free}
0 \longrightarrow R(-d-d_2) \longrightarrow R(1-d-d_1)\oplus R(1-d-d_2)^2 \longrightarrow {\rm Syz}(J_f) \longrightarrow 0
\end{equation}
with $d_1\le d_2$ and $d_1+d_2=d$.
\end{definition}

\begin{example}(1) The rational cuspidal septic $C\subset \PP^2$ of equation $C=V(f)=V(x_0^7+x_0^3x_1^4+x_1^6x_2)$ is free. Indeed, $J_f=(7x_0^6+3x_0^2x_1^4,4x_0^3x_1^3+6x_1^5x_2,x_1^6)\subset R$ and it has a minimal free $R$-resolution of the following type:
$$
0 \longrightarrow R(-9)^2 \longrightarrow R(-6)^3 \longrightarrow J_f \longrightarrow 0. $$
We have ${\rm mdr}(f)=3$, $\deg(J_f)=27$ and $C$ is free.

\noindent (2) The rational cuspidal septic $C\subset \PP^2$ of equation $C=V(f)=V(x_0^7+x_0^4x_1^3+x_1^6x_2)$ is nearly free. Indeed, $J_f=(7x_0^6+4x_0^3x_1^3,3x_0^4x_1^2+6x_1^5x_2,x_1^6)
\subset R$ and it has a minimal free $R$-resolution of the following type:
$$
0 \longrightarrow R(-11) \longrightarrow R(-9)\oplus R(-10)^2 \longrightarrow R(-6)^3 \longrightarrow J_f \longrightarrow 0. 
$$
We have ${\rm mdr}(f)=3$, $\deg (J_f)=26$ and $C$ is nearly free.

\vskip 2mm

\noindent (3) The rational septic $C\subset \PP^2$ of equation $C=V(f)=V(x_0^7+x_0^6x_2+x_1^6x_2)$ is neither free nor nearly free. Indeed, $J_f=(7x_0^6+6x_0^5x_2,6x_1^5x_2,x_0^6+x_1^6)\subset R$ and it has a minimal free $R$-resolution of the following type:
$$
0 \longrightarrow R(-13)^2\longrightarrow R(-8)\oplus R(-12)^3 \longrightarrow R(-6)^3 \longrightarrow J_f \longrightarrow 0. 
$$
We have ${\rm mdr}(f)=2$ and $\deg (J_f)=25$. 
\end{example}

Observe that the condition that a reduced singular curve $C=V(f)$ in $\PP^2$ is free is
equivalent to the Jacobian ideal $J_f$ being arithmetically Cohen-Macaulay
of codimension two; such ideals are completely described by the
Hilbert-Burch Theorem (see, for instance, \cite{e}): if $I = \langle g_1,\cdots,
g_m\rangle \subset R$ is a Cohen-Macaulay ideal of codimension two, then $I$ is defined
by the maximal minors of the $(m+1)\times m$ matrix of the first
syzygies of the ideal $I$. Combining this with Euler's formula for a homogeneous
polynomial, we get that a free curve $C=V(f)$ in $\PP^2$ has a very
constrained structure: there exists a $3 \times 3$ matrix $M$, with one row consisting of the 3 variables, and the remaining $2$ rows a minimal set of generators of the graded $R$-module ${\rm Syz} (J_f)$, such that $\det (M)\equiv 0 \ \textrm{mod  }(f)$.

\vskip 2mm
Let us recall some notions from singularity theory as commonly used.
We fix $C=V(f)\subset \PP^2$ a reduced, not necessarily irreducible, plane curve and fix an
isolated singular point $p\in C$. Up to a projectivity, we may assume $p=(1:0:0)$.

Let $g = g(x_1,x_2)$ be a holomorphic function in a neighborhood of the origin in 
$\mA^2_\CC =\CC^2$ defining the singularity in the affine open $x_0\neq 0$.

\begin{definition} \label{def_tau_mu}
The {\em Milnor number} of a reduced plane curve $C=V(f)\subset \PP^2$ at $p=(1:0:0)\in C$ is 
\begin{equation}\label{eq: local Milnor}
\mu _{p}(C) = \dim \CC \{x_1, x_2 \}/ \langle \partial_1 g,\partial_2 g \rangle.
\end{equation}

The {\em Tjurina number} of a reduced plane curve $C=V(f)\subset \PP^2$ at $p=(1:0:0)\in C$ is 
\begin{equation}\label{eq: local Tjurina}
\tau _{p}(C) = \dim \CC \{x_1, x_2 \}/ \langle \partial_1 g,\partial_2 g ,g\rangle.
\end{equation}

We clearly have $\tau _{p}(C)\le \mu _{p}(C)$ for any singular point $p$.
\end{definition}

\begin{definition}
A singularity is {\it quasi-homogeneous} if and only if there exists a holomorphic change of variables so that the local defining equation becomes weighted homogeneous. Recall that $g(x_1,x_2) =\sum c_{ij}x_1^ix_2^j$ is said to be {\em weighted homogeneous} if there exist rational numbers $\alpha,\beta$ such that $c_{ij}x_1^{i\alpha}x_2^{j\beta}$ is homogeneous.
\end{definition}

We define
$$
\mu (C):= \sum _{p\in Sing(C)} \mu _p(C)
$$
to be the {\em total Milnor number}.

For a projective plane curve $C=V(f)\subset \PP^2$, it holds:
$$
\tau (C):=\deg J_f = \sum _{p\in Sing(C)} \tau _p(C),
$$
where $J_f$ is the Jacobian ideal. We call $\tau (C)$ the {\em total Tjurina number} of $C$.

\vskip 2mm
Following Saito's classification of weighted homogeneous polynomials \cite{S} it follows that a reduced singular curve $C$ has a quasi-homogeneous singularity at $p$ if, and only if, $\mu_p(C)=\tau_p(C)$.
 
\begin{example}\label{singW13} Consider the following curves presented in \cite{Wall}. The notation used for the singularities is taken from Arnol'd's classification \cite{Arnold}.
\begin{enumerate} 
  
  \item A unicuspical quartic with a tangent in the cuspidal point (see \cite[Case Q10 with $t=0$, page 271]{Wall}, given by the equation $$C=V(f)=V(x_0(x_0^3x_2+x_1^4))$$ is a free curve with $mdr(f)=1$ and a singularity in $p=(0:0:1)$; it belongs to the unimodal family of singularity germs $W_{13}$, see \cite[Theorem II]{Arnold}. Since $\tau_p(C)=13=\mu_p(C)$, it follows that $p$ is a quasi-homogeneous singularity.

\vskip 2mm
\item A unicuspidal quartic with the tangent in the cuspidal point 
(see \cite[Case Q10 with $t=1$, page 271]{Wall}
of equation $$C=V(f)=V(x_0(x_0^3x_2+x_0^2x_1^2+x_1^4))$$ 
is a free curve with $mdr(f)=2$ and a singularity at $p=(0:0:1)$; this is another member of the unimodal family $W_{13}$. Since $\tau_p(C)=12$ and $\mu_p(C)=13$, it follows that $p$ is a non quasi-homogeneous singularity. 
\end{enumerate}
\end{example}

Free curves and nearly free curves have been deeply studied in the last decade, but, as far as we know, it is difficult to check if their singular points are quasi-homogeneous. In what follows we will establish a criterion in terms of a syzygy matrix of the Jacobian ideal.

\vskip 2mm 
A result of du Plessis and Wall gives upper and lower bounds for the total Tjurina number $\tau(C)$ of a reduced plane curve $C=V(f)\subset \PP^2$ in terms of its degree $d$ and the number ${\rm mdr}(f)$. 
It turns out that the freeness of a curve is related with $\tau(C)$. Moreover, a result of Dimca and Sticlaru 
relates also nearly free curves with $\tau(C)$. We summarize these results in the following:

\begin{proposition}\label{bounds_tau} 
Let
$C=V(f)$ be a reduced singular plane curve of degree $d$ and let $r:={\rm mdr}(f)$. Then it holds:
\begin{enumerate}
\item the global Tjurina number satisfies
\begin{equation}\label{first bound}
(d-1)(d-r-1)\le \tau(C)\le (d-1)(d-r-1)+r^2;
\end{equation}
Moreover, if $\tau(C)= (d-1)(d-r-1)+r^2$, then the curve $C$ is free, and such a condition is also sufficient if $d> 2r$.

If $d \ge 2r$, then $\tau(C)= (d-1)(d-r-1)+r^2 -1$ if and only if $C$ is a nearly free curve.
\item If, in addition, we have $2r + 1 > d$, then:
\begin{equation}\label{second bound}
\tau(C) \le (d - 1)(d - r - 1) + r^2 - {(2r + 1 - d)(2r + 2 - d)} /{2} = \frac {d(d - 1)}{2}-r^2 + r(d - 2).
\end{equation}
\end{enumerate}
\end{proposition}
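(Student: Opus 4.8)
The plan is to recover the statement by combining the du Plessis--Wall inequalities with the explicit Tjurina formulas that one reads off from the resolutions \eqref{free} and \eqref{nearly free}. I would organize the argument into the two bounds, the freeness characterization, and the nearly free characterization.

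For the inequalities in part (1) and the refined bound in part (2), I would work with the graded Milnor algebra $M(f)=R/J_f$ and the finite-length module $N(f)=I_f/J_f$. Since $C$ is singular, the partials $\partial_0 f,\partial_1 f,\partial_2 f$ of degree $d-1$ fail to form a regular sequence, and the inequalities precisely measure this failure. The engine is the self-duality of $N(f)$: its Hilbert function is symmetric about $T/2$, where $T=3(d-2)$, so that $\dim_\CC N(f)_s=\dim_\CC N(f)_{T-s}$ for every $s$. Writing $\tau(C)=\deg J_f$ as the stable value of $\dim_\CC M(f)_s$ for $s\gg 0$ and comparing with the complete-intersection model given by a regular sequence of three forms of degree $d-1$ (whose Milnor algebra is Artinian Gorenstein with socle in degree $T$), one expresses the defect $(d-1)^2-\tau(C)$ through the graded pieces of $N(f)$. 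The presence of a syzygy in degree $r={\rm mdr}(f)$ restricts where these pieces can be nonzero, and the symmetry then bounds their total contribution by $r^2$ when $d>2r$, and by the triangular number $\tfrac{(2r+1-d)(2r+2-d)}{2}$ when $2r+1>d$; I would cite du Plessis--Wall for this bookkeeping and check the algebraic simplification in \eqref{second bound} by a direct expansion.

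For the freeness part I would use that $C$ is free exactly when $N(f)=0$, i.e. when $J_f$ is saturated of projective dimension two. In that case \eqref{free} gives exponents $(d_1,d_2)$ with $d_1+d_2=d-1$ and $d_1=\min(d_1,d_2)=r$, and a Hilbert-series computation from the induced resolution of $R/J_f$ yields $\tau(C)=(d-1)^2-d_1d_2=(d-1)(d-r-1)+r^2$, the extremal value. Conversely, when $d>2r$ equality in the upper bound forces the symmetric defect recorded by $N(f)$ to vanish, whence $C$ is free; this is \cite[Corollary 1.2 and Theorem 1.3]{D}, and the hypothesis $d>2r$ (equivalently $r<d/2$) is exactly what guarantees that $r$ is the smaller exponent, so that a nonzero symmetric $N(f)$ would overshoot $\tau_{\max}$.

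For the nearly free part I would read $\tau(C)$ off \eqref{nearly free}: splicing it with the presentation $R(-(d-1))^3\to J_f$ and taking exponents $(d_1,d_2)$ with $d_1\le d_2$, $d_1+d_2=d$ and $d_1=r$, the analogous Hilbert-series computation gives $\tau(C)=(d-1)^2-d_1(d_2-1)-1=(d-1)(d-r-1)+r^2-1$, one less than the free value. Conversely, under $d\ge 2r$ the equality $\tau(C)=\tau_{\max}-1$ forces $N(f)$ to have one-dimensional graded pieces placed symmetrically about $T/2$, which is the defining condition of a nearly free curve; this is \cite[Theorem 2.2]{DS5}. The main obstacle is the first step: the du Plessis--Wall inequalities, and especially the sufficiency of the extremal Tjurina value for freeness, genuinely rely on the self-duality of $N(f)$ together with the comparison against the complete-intersection model, whereas the Tjurina formulas for free and nearly free curves are routine once the respective resolutions are in hand.
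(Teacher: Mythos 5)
Your proposal is correct and takes essentially the same route as the paper: the paper proves this proposition purely by citation to \cite[Theorem 3.2]{PW} and \cite[Corollary 1.2 and Theorem 1.3]{D}, which are exactly the results you invoke for the bounds and for the converse (freeness and nearly-freeness) characterizations. Your added details --- the Hilbert-series computations $\tau(C)=(d-1)^2-d_1d_2$ and $\tau(C)=(d-1)^2-d_1(d_2-1)-1$ from the resolutions \eqref{free} and \eqref{nearly free}, and the self-duality of $N(f)$ about $T=3(d-2)$ --- are accurate expository supplements to those citations.
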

\begin{proof}
See \cite[Theorem 3.2]{PW} and \cite[Corollary 1.2 and Theorem 1.3] {D}.
\end{proof}

\subsection{Liaison theory} 

In Section 5 we shall characterize non quasi-homogeneous singularities of nearly free curves and we shall use liaison theory as one of the main tools, so we shortly recall it for the sake of completeness.

\begin{definition} 
Let $X$ be a smooth irreducible projective scheme. Assume that $X$ is subcanonical (that is, $\omega _X\cong \sO_X(\lambda )$ for some $\lambda \in \ZZ$). Let $V_1, V_2\subset X$ be two equidimensional closed subschemes of codimension $c$ and assume that they do not share any common irreducible component. We will say that $V_1$ and $V_2$ are {\em directly linked} if the schematic union $Y=V_1\cup V_2$ is a complete intersection, codimension $c$, subscheme of $X$.  
\end{definition}

\begin{example}
  A simple example of schemes directly
linked is the following one: let $C_1$ be a
twisted cubic in $\PP^3$ and let $C_2$ be a secant line to $C_1$.
The union of $C_1$ and $C_2$ is a degree 4 curve which is the
complete intersection $X$ of two quadrics $Q_1$ and $Q_2$. So
$C_1$ and $C_2$ are directly linked by the complete
intersection $X$. More precisely, we take $C_1\subset \PP^3$ the
twisted cubic with homogeneous ideal
$$I(C_1)=(x_0x_2-x_1^2,x_0x_3-x_1x_2,x_1x_3-x_2^2)\subset
\mathbb{C}[x_0,x_1,x_2,x_3],$$ and the complete intersection ideal
$$I(X)=(x_1^2-x_2x_0+x_2^2-x_1x_3,x_1^2-x_0x_2+2x_2^2-2x_1x_3)\subset
I(C_1)$$ the residual to $C_1$ in $X$ is the line $C_2$ defined by
$I(C_2)=(x_1,x_2)$.
\end{example}

A useful feature of liaison
is that through the mapping cone procedure, it is possible to
pass from a free resolution of a scheme $V_1$ to a free
resolution of its residual scheme $V_2$ in a complete intersection $Y$.
Indeed, we have:

\begin{proposition}\label{resolution}
Let $X$ be a smooth irreducible subcanonical projective scheme and let $V_1, V_2\subset X$ be two equidimensional closed subschemes of codimension $c$, directly linked by a complete intersection $Y\subset X$.
Let 
$$ 
0\rightarrow \sO _{X}(-t)\rightarrow \sF _{c-1} \rightarrow \cdots \rightarrow \sF _1\rightarrow \sI _{Y,X}\rightarrow 0
$$
and
$$ 
0\rightarrow \sG _c\rightarrow \sG _{c-1} \rightarrow \cdots \rightarrow \sG _1\rightarrow \sI _{V_1,X}\rightarrow 0
$$
be locally free resolutions of $\sI _{Y,X}$ and $\sI _{V_1,X}$. Then 
they induce a locally
free resolution of $\sI _{V_2,X}$ of the following type
 $$
 0\rightarrow \sG_1^{\vee }(-t) \rightarrow \sF
_1^{\vee }(-t)\oplus \sG _2^{\vee }(-t)\rightarrow \cdots \rightarrow \sF
_{c-1}^{\vee }(-t)\oplus \sG _{c}^{\vee }(-t)\rightarrow \sI _{V_2,X}\rightarrow 0.
$$  
\end{proposition}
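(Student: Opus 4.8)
The plan is to run the classical mapping-cone construction of liaison theory and then to identify the outcome with $\sI_{V_2,X}$ by means of the colon-ideal description of linkage. Since the statement only asserts that the two given resolutions \emph{induce} a locally free resolution, and not a minimal one, I will not track minimality, which removes the only delicate cancellations.

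I would first record the structural inputs. Because $Y$ is a complete intersection of codimension $c$ in the smooth subcanonical scheme $X$, it is arithmetically Gorenstein, its ideal sheaf is resolved by a self-dual Koszul complex, and $\omega_Y$ is a twist of $\sO_Y$; this is what will ultimately collapse all the shifts to the single twist $\sO_X(-t)$ appearing in the statement. Moreover, the very fact that $\sI_{V_1,X}$ is given a locally free resolution with the $c$ terms $\sG_1,\ldots,\sG_c$ forces $\sO_{V_1}$ to have projective dimension $c$, hence $V_1$ to be locally Cohen--Macaulay of codimension $c$; consequently $\mathcal{E}xt^{i}_{\sO_X}(\sO_{V_1},\sO_X)=0$ for $i\neq c$. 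Finally, since $V_1,V_2$ are equidimensional of codimension $c$, share no component, and are directly linked by $Y$, the defining property of linkage gives $\sI_{V_2,X}=(\sI_{Y,X}:\sI_{V_1,X})$, equivalently the sheaf isomorphism $\sI_{V_2,X}/\sI_{Y,X}\cong\mathcal{H}om_{\sO_X}(\sO_{V_1},\sO_Y)$.

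Next, from $V_1\subseteq Y$ I obtain the inclusion $\sI_{Y,X}\hookrightarrow\sI_{V_1,X}$, and the comparison theorem for locally free resolutions lifts it to a morphism of complexes $\phi_\bullet$ from the Koszul resolution of $\sI_{Y,X}$ to the given resolution of $\sI_{V_1,X}$. I would then apply $\mathcal{H}om_{\sO_X}(-,\sO_X)(-t)$ and form the mapping cone of $\phi_\bullet^{\vee}(-t)$. Reindexing this cone, the dualized top Koszul term $(\sO_X(-t))^{\vee}(-t)=\sO_X$ becomes the free module that augments onto $\sO_{V_2}$, while the remaining terms are exactly $\sG_1^{\vee}(-t)$ together with the pairs $\sF_i^{\vee}(-t)\oplus\sG_{i+1}^{\vee}(-t)$. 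Stripping off this $\sO_X$, i.e. resolving the image of the last differential rather than the quotient $\sO_{V_2}$, yields precisely the complex displayed in the statement.

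The main obstacle is to prove that this cone is acyclic away from its bottom degree and that the image of its last differential is $\sI_{V_2,X}$. For acyclicity I would combine the Koszul self-duality of $Y$ with the fact that Cohen--Macaulayness of $V_1$ concentrates $\mathcal{E}xt^{\bullet}_{\sO_X}(\sO_{V_1},\sO_X)$ in degree $c$, so that the long exact cohomology sequence of the cone forces all intermediate cohomology to vanish. To pin down the image, I would use the linkage relation recorded above: the last differential maps into the colon ideal $\sI_{Y,X}:\sI_{V_1,X}=\sI_{V_2,X}$, and the isomorphism $\sI_{V_2,X}/\sI_{Y,X}\cong\mathcal{H}om_{\sO_X}(\sO_{V_1},\sO_Y)$ shows its image is all of $\sI_{V_2,X}$. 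What then remains is purely the bookkeeping of twists, and it is exactly the subcanonicity of $X$ and the Gorenstein property of $Y$ that make every shift reduce to the single $\sO_X(-t)$ in the displayed resolution.
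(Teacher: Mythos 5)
Your proposal is correct and is precisely the classical Peskine--Szpiro mapping-cone argument: the paper itself offers no written proof, merely citing \cite[Section 3]{Ha} and \cite[Proposition 2.5]{Pe}, and your sketch reconstructs exactly what those references do, with all the essential ingredients in place (lifting $\sI_{Y,X}\hookrightarrow\sI_{V_1,X}$ to a map of resolutions, dualizing and twisting by $-t$, using local Cohen--Macaulayness of $V_1$ to concentrate $\mathcal{E}xt^{\bullet}_{\sO_X}(\sO_{V_1},\sO_X)$ in degree $c$ so the cone is acyclic, and identifying the cokernel via $\sI_{V_2,X}=(\sI_{Y,X}:\sI_{V_1,X})$ together with the Gorenstein self-duality of the Koszul complex of $Y$). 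No gaps worth flagging beyond the deliberate omission of minimality, which the statement does not claim.
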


\begin{proof}
See \cite[Section 3]{Ha} and \cite[Proposition 2.5]{Pe}.   
\end{proof}

\section{Characterization of quasi-homogeneous singularities of free curves}

In this section we focus on free plane curves $C=V(f)$ satisfying $d > 2r$.
Using polar maps and a first syzygy matrix associated with the Jacobian ideal $J_f$ we will prove Theorem \ref{main1}. Recall that free curves $C=V(f)\subset \PP^2$ admit a minimal free resolution of the type 
$$
0 \longrightarrow R(1-d+r)\oplus R(-r) \overset {M_f}{\longrightarrow} R^3 \longrightarrow J_f (d-1)\longrightarrow 0. 
$$

In particular, the gradient $\nabla f$ of a free curve can be expressed with the order two minors of a Hilbert-Burch matrix associated with the Jacobian ideal; we shall call a Hilbert-Burch matrix $M_f$ also a {\em first syzygy matrix} associated with $J_f$:
\begin{equation}\label{eq: Hilbert-Burch}
  M_f=\begin{pmatrix} g_0 & l_0\\
g_1 & l_1\\
g_2 & l_2\\
\end{pmatrix},
\end{equation}
where $g_i \in R_r$ and $l_i \in R_{d-1-r}$.

This allows us to give a precise description of the closure $S_f$ of the graph $\Gamma_{\nabla f}$ of the polar map
$$
\nabla f : \PP^2 \dasharrow \PP^2, \quad \nabla f(p)=(\partial_0f(p):
\partial_1 f(p): \partial_2 f(p)).
$$
Indeed, for any $p=(p_0:p_1:p_2)\notin \Sigma_f$, where $\Sigma_f$ is the Jacobian scheme
of Definition \ref{jabscheme}, denote by $(g_0, g_1,g_2)\in {\rm Syz}(J_f)_r$ a triple giving a syzygy of minimal degree and by $(l_0,l_1,l_2) \in {\rm Syz}(J_f)_{d-1-r}$ a non proportional triple giving a syzygy of degree $d-1-r$. Then the coordinates of the point $\nabla f(p)$ are 
given by the order two minors of
$$
\begin{pmatrix} g_0(p) & l_0(p)\\
g_1(p) & l_1(p)\\
g_2(p) & l_2(p)\\
\end{pmatrix}.
$$
So $\nabla f(p)$ is
the intersection point of the two distinct lines 
$$
\nabla f(p): \ \left\{
\begin{array}{l}
g_0(p) y_0 +g_1(p)y_1 +g_2(p)y_2=0\\
l_0(p) y_0 +l_1(p)y_1 +l_2(p)y_2=0.
\end{array}
\right.
$$

As a consequence, the fiber of $\nabla f$ over a general point $q=(q_0:q_1:q_2)\in \PP^2$ is contained in the zero locus of
$$
 \left\{
\begin{array}{l}
g_0 q_0 +g_1 q_1 +g_2 q_2=0\\
l_0 q_0 +l_1q_1 +l_2q_2=0.
\end{array}
\right.
$$
It follows that $S_f={\overline \Gamma}_{\nabla f}$ is contained in the complete intersection $Z \subset \PP^2 \times \PP^2$ of the two divisors
given by the equations
\begin{equation}\label{eq: Z}
T=V
(g_0 y_0 +g_1 y_1 +g_2 y_2)\\
 \quad \text{ and } \quad
D=V(l_0 y_0 +l_1y_1 +l_2y_2),
\end{equation}
where $((x_0:x_1:x_2),(y_0:y_1:y_2)) \in \PP^2 \times \PP^2$.
 By choosing $L_1$ and $L_2$ as generators of the Picard groups of the two factors of $\PP^2 \times\PP^2$, and by setting $p_i:\PP^2 \times \PP^2 \to \PP^2$ to be the two projections, we have that the two divisors
$h_1=p_1^\star L_1$ and $h_2=p_2^\star L_2$
are generators for the Chow ring $A(\PP^2 \times \PP^2)$. With such a notation we have
$$
T\sim r h_1+h_2\\
 \quad \text{ and } \quad D \sim (d-1-r)h_1+h_2.
$$
In particular, the class of the complete intersection $Z_f:=T \cap D$ is 
\begin{equation}\label{eq: class of ci}
\begin{array}{rcl} Z_f & \equiv & T \cdot D= (rh_1 + h_2) \cdot ((d-1-r)h_1+h_2) \\
& = & r(d-1-r)h_1^2 + (d-1)h_1 h_2 + h_2^2 \\
& = & ((d-1)^2 - \tau (C))h_1^2 + (d-1)h_1 h_2 + h_2^2.
\end{array}
\end{equation}

On the other hand, the class of $S_f$ in the Chow ring $A(\PP^2 \times \PP^2)$ can be determined as follows.

\begin{lemma}
  The class of $S_f={\overline \Gamma}_{\nabla f}$ in $A (\PP^2 \times \PP^2)$ is given by
\begin{equation}\label{eq: class of S}
S_f\equiv (\deg \nabla f)\ h_1^2
+ (d-1)h_1 h_2 + h_2^2,
\end{equation}
where $\deg \nabla f= (d-1)^2 - \mu(C)$.
\end{lemma}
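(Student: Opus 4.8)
The plan is to compute the three coefficients in the expansion $S_f \equiv a\, h_1^2 + b\, h_1 h_2 + c\, h_2^2$ in $A^2(\PP^2 \times \PP^2)$ by pairing $S_f$ against the complementary classes $h_2^2$, $h_1 h_2$ and $h_1^2$, and then to identify the leading coefficient with $(d-1)^2 - \mu(C)$. Recall that $A(\PP^2\times\PP^2)\cong\ZZ[h_1,h_2]/(h_1^3,h_2^3)$, that the top (point) class is $h_1^2 h_2^2$, and that among the degree-four monomials only $h_1^2 h_2^2 = h_1 h_2 \cdot h_1 h_2 = h_1^2 \cdot h_2^2$ are nonzero (all others contain $h_1^3$ or $h_2^3$). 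Hence $a = S_f\cdot h_2^2$, $b = S_f\cdot h_1 h_2$ and $c = S_f\cdot h_1^2$.

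First I would compute $c = S_f \cdot h_1^2$. Since $h_1^2 = p_1^\star(\mathrm{pt})$, this number counts the points of $S_f$ lying over a general point $p \in \PP^2$. As $p \notin \Sigma_f$ for general $p$, the polar map is defined at $p$ and the graph contains only the point $(p,\nabla f(p))$, so $c = 1$; equivalently $p_1|_{S_f}$ is birational. Next, $a = S_f\cdot h_2^2$ with $h_2^2 = p_2^\star(\mathrm{pt})$ counts the points of $S_f$ lying over a general point $q$ of the target $\PP^2$, i.e.\ the fibre of $p_2|_{S_f}\colon S_f\to\PP^2$ over $q$. Since $S_f$ is the closure of the graph of $\nabla f$, this fibre has cardinality equal to the topological degree $\deg\nabla f$ for general $q$, so $a = \deg\nabla f$.

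For $b = S_f\cdot h_1 h_2$ I would choose general lines $\ell_1, \ell_2 \subset \PP^2$ in the source and target, writing $\ell_2 = V(\sum_i q_i y_i)$ for the corresponding point $q = (q_0:q_1:q_2)$. A point $(p,\nabla f(p)) \in S_f$ lies in $p_1^{-1}(\ell_1)\cap p_2^{-1}(\ell_2)$ exactly when $p \in \ell_1$ and $\nabla f(p) \in \ell_2$; the latter condition reads $\sum_i q_i\, \partial_i f(p) = 0$, which cuts out the polar curve $P_q(f) = V(\sum_i q_i \partial_i f)$ of degree $d-1$. For general $\ell_1$ and $q$ this polar curve meets $\ell_1$ transversally in $d-1$ reduced points, none lying on $\Sigma_f$, and over each of them $S_f$ carries a single point. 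Bézout then gives $b = (d-1)\cdot 1 = d-1$, whence $S_f \equiv (\deg\nabla f)\,h_1^2 + (d-1)\,h_1 h_2 + h_2^2$.

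It remains to identify $\deg\nabla f = (d-1)^2 - \mu(C)$. Two general polar curves $P_q(f)$ and $P_{q'}(f)$, both of degree $d-1$, meet in $(d-1)^2$ points counted with multiplicity; away from $\mathrm{Sing}\,C$ these are precisely the fibre of $\nabla f$ over a general point of the line $\overline{qq'}$ in the target, contributing $\deg\nabla f$, while at each singular point $p$ the local intersection multiplicity of two generic polars equals $\dim_\CC\CC\{x_1,x_2\}/\langle\partial_1 g,\partial_2 g\rangle = \mu_p(C)$. Summing via Bézout yields $(d-1)^2 = \deg\nabla f + \sum_p \mu_p(C) = \deg\nabla f + \mu(C)$, i.e.\ $\deg\nabla f = (d-1)^2 - \mu(C)$, the classical formula for the degree of the gradient map. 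I expect the main technical point to be the control of general position in the $h_1 h_2$ computation, together with the local identification of the polar intersection multiplicity with the Milnor number; the remaining coefficients follow from direct pairings in the Chow ring.
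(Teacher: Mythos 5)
Your proposal is correct and takes essentially the same approach as the paper: both write $S_f \equiv \alpha\, h_1^2 + \beta\, h_1 h_2 + \gamma\, h_2^2$ and compute the coefficients by intersection pairings, with $\gamma = S_f\cdot h_1^2 = 1$ from birationality of $p_1|_{S_f}$, $\alpha = S_f\cdot h_2^2 = \deg\nabla f$ from the general fibre of $p_2|_{S_f}$, and $\beta = d-1$, where your explicit B\'ezout count of $\ell_1 \cap P_q(f)$ is just the geometric unwinding of the paper's projection-formula computation $S_f\cdot h_1 h_2 = \nabla f_\star L_1 \cdot L_2$. The only difference is that you also sketch a proof of the identity $\deg\nabla f = (d-1)^2-\mu(C)$, which the paper treats as a known classical fact; your argument for it is the standard one, the technical point being exactly the one you flag, namely that the local intersection multiplicity of two generic polars at a singular point equals $\mu_p(C)$ (which holds because $g$ is integral over its Milnor ideal, so adding multiples of $g$ to the generators does not change the colength).
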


\begin{proof}
  Since $S_f$ is a codimension two cycle, its class can be written in the form
 $$
 S_f \equiv \alpha \ h_1^2 +\beta\ h_1h_2 + \gamma\ h_2^2,
 $$ 
 for some coefficients $\alpha, \beta, \gamma \in \ZZ$. Being the closure of a graph, it is birational to $\mP^2$ via the first projection $p_1$; in particular, $S_f\cdot h_1^2=1$, so
 $\gamma =1$. Moreover, the coefficient $\alpha =S_f \cdot h_2^2$
 is the cardinality of a general fiber of ${p_2}_{|S_f}$.
 By construction we have
 $$
 \rho_f:={p_2}_{|S_f}= \nabla f \circ {p_1}_{|S_f},
 $$
 and as ${p_1}_{|S_f}: S_f \to \PP^2$ is birational,
 we conclude that $\alpha$ is equal to the degree of the polar map:
 \[
 \alpha = {\rm deg} \nabla f.
 \]
 
 Finally, by the projection formula, we have 
 $$
 \beta =S_f \cdot h_1 h_2=
 S_f \cdot h_1 \cdot \rho_f^\star L_2={\rho_f}_\star (S_f \cdot h_1) \cdot L_2=
 $$
 $$
 = \nabla f_\star L_1 \cdot L_2 = d-1.
 $$
\end{proof}

\begin{corollary}
 We have that 
$$
\mu(C)=\tau(C) 
\iff Z_f \text{\ is \ irreducible.}
$$
Moreover, if $Z_f$ is reducible, then $S_f \subsetneq Z_f$ and
$$
Z_f = S_f + \sum_{i=1}^s m_i\  p_1^{-1} (P_i),
$$
where $\{P_1, \cdots , P_s\} \subseteq {\rm Sing} \ C$ and $m_i \ge 1$ are suitable multiplicities.
\end{corollary}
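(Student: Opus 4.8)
The plan is to read off both statements from the two class computations already in hand, namely
$Z_f \equiv ((d-1)^2-\tau(C))h_1^2+(d-1)h_1h_2+h_2^2$ and $S_f\equiv((d-1)^2-\mu(C))h_1^2+(d-1)h_1h_2+h_2^2$, combined with the inclusion $S_f\subseteq Z_f$ established before the Lemma. Since $S_f=\overline{\Gamma}_{\nabla f}$ is the closure of a graph over the irreducible $\PP^2$, it is an irreducible surface, while $Z_f=T\cap D$ is a complete intersection of two divisors in $\PP^2\times\PP^2$, hence equidimensional of dimension two (here $d>2r$ guarantees that the two syzygies have distinct degrees, so $T$ and $D$ meet properly and the class computation is legitimate). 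Therefore $S_f$ is an irreducible component of $Z_f$, and I may write the cycle identity $[Z_f]=m[S_f]+[R]$ with $m\ge 1$ and $R$ an effective residual $2$-cycle supported off $S_f$.

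The first main step is to identify $R$ by analyzing the fibers of $p_1\colon Z_f\to\PP^2$. Since the order-two minors of $M_f$ are, up to sign, the partials $\partial_i f$, which cut out $\Sigma_f$, for $p\notin\Sigma_f$ the matrix $M_f(p)$ has rank two; the two rows then define distinct lines in the second factor meeting in the single reduced point $\nabla f(p)\in S_f$. Hence the generic fiber of $p_1|_{Z_f}$ is one reduced point, $S_f$ is the unique component of $Z_f$ dominating the first factor, and $p_1|_{S_f}$ is birational. Any other component $W$ therefore satisfies $p_1(W)\subseteq\Sigma_f$; as $W$ is two-dimensional and $\Sigma_f$ is finite, $p_1(W)$ is a single point and $W$ must be an entire fiber $p_1^{-1}(P_i)=\{P_i\}\times\PP^2$, which is contained in $Z_f$ exactly when both rows of $M_f$ vanish at $P_i$, i.e. ${\rm rk}\,M_f(P_i)=0$; in particular $P_i\in\Sigma_f={\rm Sing}\,C$. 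Each such fiber has class $h_1^2$, so intersecting the cycle identity with $h_1h_2$ and using $[S_f]\cdot h_1h_2=d-1$ together with $h_1^2\cdot h_1h_2=0$ forces $m=1$. This produces the asserted decomposition $Z_f=S_f+\sum_i m_i\,p_1^{-1}(P_i)$ with $\{P_i\}\subseteq{\rm Sing}\,C$ and $m_i\ge 1$.

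The second step is the numerical comparison. Writing the decomposition in $A(\PP^2\times\PP^2)$ gives $[Z_f]=[S_f]+\bigl(\sum_i m_i\bigr)h_1^2$; matching the coefficient of $h_1^2$ in the two displayed classes yields $\sum_i m_i=\mu(C)-\tau(C)$, consistent with $\tau(C)\le\mu(C)$. Since $Z_f$ is a complete intersection it is Cohen–Macaulay with no embedded components, so $Z_f$ is irreducible precisely when it has no residual component, that is when the effective sum $\sum_i m_i$ is empty. Hence $Z_f$ is irreducible $\iff\sum_i m_i=0\iff\mu(C)=\tau(C)$, and when $Z_f$ is reducible the same analysis gives $S_f\subsetneq Z_f$ and the stated cycle formula.

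I expect the main obstacle to be the identification of the residual components: one must rule out any component dominating the first factor (handled by the generic-fiber-is-one-point observation) and any component mapping onto a curve (impossible over the finite set $\Sigma_f$), and then show the remaining components are exactly the full fibers over the rank-zero locus of $M_f$. The key device throughout is the Hilbert–Burch identification of the order-two minors of $M_f$ with the partials $\partial_i f$, which translates the rank stratification of $M_f$ into the geometry of $\Sigma_f$; the multiplicity computation $m=1$ and the bookkeeping of the $h_1^2$-coefficient are then routine.
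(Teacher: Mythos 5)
Your proof is correct and follows essentially the same route as the paper: compare the Chow classes of $Z_f$ and $S_f$, observe that the two surfaces coincide over $\PP^2\setminus\Sigma_f$ so that any extra component must be a vertical plane $\{P_i\}\times\PP^2$ over a rank-zero point of $M_f$, and read off $\sum_i m_i=\mu(C)-\tau(C)$ from the $h_1^2$-coefficients; your explicit multiplicity-one computation via intersection with $h_1h_2$ is a worthwhile detail that the paper leaves implicit. One inessential slip: when $d=2r+1$ the two syzygy degrees actually coincide, and properness of the intersection $T\cap D$ follows not from ``distinct degrees'' but from the two columns of $M_f$ being non-proportional minimal generators (their $2\times 2$ minors are the partials of $f$, not identically zero), though nothing in your argument depends on that parenthetical.
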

\begin{proof}
The first statement follows by observing that
$$
\mu(C)=\tau(C) 
 \iff \deg \nabla f = (d-1)^2 - \tau(C) \iff S_f=Z_f  \iff Z_f \text{\ is \ irreducible.}
$$
  
  The statement immediately follows by comparing the classes \eqref{eq: class of ci} and \eqref{eq: class of S}, from which is clear that the only possible irreducible components of $Z_f$ different from $S_f$ are vertical planes of class $h_1^2$. Moreover, we consider  the Jacobian scheme $\Sigma_f$ of $C$ (see Definition \ref{jabscheme}) and we observe that over $\PP^2 \setminus \Sigma_f$, the surfaces $S_f$ and $Z_f$ coincide by construction, so $Z_f = S_f \cup \bigcup_{i=1}^s p_1^\star P_i$,
with $\{P_1, \cdots , P_s\} \subseteq {\rm Sing} \ C$.
\end{proof}

It follows that checking the irreducibility of $Z_f$ amounts to checking if for some point $p=(p_0:p_1:p_2) \in \Sigma_f$, the equations
$$
g_0(p) y_0 +g_1(p) y_1 +g_2(p) y_2=0\\
 \quad \text{ and } \\ \quad
l_0(p) y_0 +l_1 (p)y_1 +l_2(p)y_2 =0
$$
vanish identically. This corresponds, in turn, to the vanishing of a Hilbert-Burch matrix \eqref{eq: Hilbert-Burch} in the point $p$. In other words, the irreducibility of $Z_f$ corresponds to the emptiness of the rank zero locus
of $M_f$:
$$
Y_f:= \{p\in \Sigma_f \ | \ {\rm rk} \ M_f (p)=0\}
\subseteq \Sigma_f.
$$
Observe that $Y_f$ can be given the natural structure of scheme induced by the homogeneous ideal
$$
\langle g_0,g_1,g_2,l_0,l_1,l_2\rangle.
$$
We summarize these observations in the following:

\begin{theorem}\label{main1}
  If $C=V(f)\subset \PP^2$ is a free curve, then 
$$
\mu (C)=\tau(C) \iff Y_f = \emptyset.
$$
  
\end{theorem}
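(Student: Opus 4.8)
The plan is to obtain Theorem \ref{main1} by combining the Corollary above with the fiberwise geometry of $Z_f$ over the Jacobian scheme $\Sigma_f$. Since the Corollary already establishes
$$
\mu(C) = \tau(C) \iff Z_f \text{ is irreducible},
$$
it suffices to prove the equivalence $Z_f \text{ irreducible} \iff Y_f = \emptyset$; the theorem then follows by transitivity.

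First I would exploit the comparison of Chow classes in \eqref{eq: class of ci} and \eqref{eq: class of S}: their difference is $(\mu(C) - \tau(C))\,h_1^2$, so the only irreducible components of $Z_f$ apart from $S_f$ can be vertical planes $p_1^{-1}(P)$ of class $h_1^2$, with $P \in \PP^2$. As $S_f$ is the closure of the graph of $\nabla f$, it is irreducible and, being birational to $\PP^2$ via $p_1$, contains no full fiber; hence $Z_f$ is irreducible exactly when $Z_f = S_f$, that is when no vertical plane occurs.

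Next I would translate the appearance of a vertical plane into the rank condition defining $Y_f$. Writing $Z_f = T \cap D$ with $T, D$ as in \eqref{eq: Z}, the entire fiber $p_1^{-1}(P) = \{P\} \times \PP^2$ is contained in $Z_f$ if and only if both linear forms $g_0 y_0 + g_1 y_1 + g_2 y_2$ and $l_0 y_0 + l_1 y_1 + l_2 y_2$ vanish identically in $y$ over $P$; this occurs precisely when $g_i(P) = l_i(P) = 0$ for all $i$, i.e. when ${\rm rk}\, M_f(P) = 0$, i.e. $P \in Y_f$. Conversely, if ${\rm rk}\, M_f(P) \ge 1$, at least one of the two forms is nonzero at $P$, so the fiber of $Z_f$ over $P$ is at most a line and produces no two-dimensional component. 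Thus the vertical planes of $Z_f$ lie exactly over the points of $Y_f$, which gives $Z_f \text{ irreducible} \iff Y_f = \emptyset$.

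The step needing a little care is ruling out that a fiber of intermediate rank (where ${\rm rk}\, M_f(P) = 1$ and the fiber is a line) could enlarge $Z_f$ by a new surface component. This is immediate from dimension, since $Z_f$ is a complete intersection of two divisors in $\PP^2 \times \PP^2$ and hence pure of dimension two, whereas such a fiber is one-dimensional; I would record this explicitly so that the set-theoretic description $Z_f = S_f \cup \bigcup_i p_1^{-1}(P_i)$ is unambiguous. With that in place, the chain of equivalences closes and yields the theorem.
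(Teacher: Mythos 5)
Your proposal is correct and follows essentially the same route as the paper: both reduce, via the Corollary, to the equivalence between irreducibility of $Z_f$ and emptiness of $Y_f$, and both identify the possible vertical components $p_1^{-1}(P)\subseteq Z_f$ with exactly those points $P$ where $M_f(P)=0$. Your explicit dimension remark ruling out rank-one (line) fibers as components is a small addition that the paper leaves implicit, but it does not change the argument.
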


\begin{proposition}
  Let $C=V(f)\subset \PP^2$ be a free curve with ${\rm mdr} (f)=r$, and let $q$ be the number of non quasi-homogeneous singular points. Then 
   \begin{equation}
       \label{eq: bound on number of non qh points}
       q \le r^2.
   \end{equation}
    
\end{proposition}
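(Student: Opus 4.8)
The plan is to reduce the statement to a point count for the zero scheme of the minimal-degree syzygy, and then to bound that count by Bézout's theorem. By the results of this section, a point $p\in{\rm Sing}\,C$ fails to be quasi-homogeneous exactly when ${\rm rk}\,M_f(p)=0$, i.e. when $p$ lies in the rank-zero locus
$$
Y_f=\{p\in\Sigma_f\mid {\rm rk}\,M_f(p)=0\},
$$
so that $q=\#Y_f$. First I would record that, writing $M_f$ as in \eqref{eq: Hilbert-Burch} with first column given by the forms $g_0,g_1,g_2\in R_r$, a point of $Y_f$ annihilates in particular these three forms; hence $Y_f\subseteq V(g_0,g_1,g_2)$, and it suffices to bound the number of points of $V(g_0,g_1,g_2)$ by $r^2$.

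Next I would observe that $g_0,g_1,g_2$ have no common factor. Indeed $(g_0,g_1,g_2)$ is a Jacobian syzygy of minimal degree $r$; if $h=\gcd(g_0,g_1,g_2)$ had positive degree, then dividing the relation $g_0\partial_0 f+g_1\partial_1 f+g_2\partial_2 f=0$ by $h$ (legitimate since $R$ is a domain) would produce a nontrivial syzygy of degree $r-\deg h<r$, contradicting $r={\rm mdr}(f)$. Consequently $V(g_0,g_1,g_2)$ is zero-dimensional.

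The key step is the estimate $\#V(g_0,g_1,g_2)\le r^2$. Setting $h=\gcd(g_1,g_2)$, $e=\deg h$, and writing $g_1=ha$, $g_2=hb$ with $a,b\in R_{r-e}$ and $\gcd(a,b)=1$, I would note that any common factor of $g_0$ and $h$ divides each of $g_0,g_1,g_2$ and is therefore a unit, so $\gcd(g_0,h)=1$. Set-theoretically,
$$
V(g_0,g_1,g_2)=\bigl(V(g_0)\cap V(h)\bigr)\cup\bigl(V(g_0)\cap V(a,b)\bigr),
$$
and Bézout yields $\#\bigl(V(g_0)\cap V(h)\bigr)\le re$ and $\#\bigl(V(g_0)\cap V(a,b)\bigr)\le\#V(a,b)\le(r-e)^2$. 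Hence
$$
q=\#Y_f\le\#V(g_0,g_1,g_2)\le re+(r-e)^2=r^2-e(r-e)\le r^2,
$$
the last inequality holding because $0\le e\le r$.

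The main obstacle is precisely that $g_0,g_1,g_2$ need not be pairwise coprime --- only their total gcd is trivial --- so one cannot directly apply Bézout to two of them; splitting off $h=\gcd(g_1,g_2)$ and using $\gcd(g_0,h)=1$ circumvents this. As a consistency check, comparing the classes \eqref{eq: class of ci} and \eqref{eq: class of S} gives $\sum_{i=1}^s m_i=\mu(C)-\tau(C)$, whence the coarser estimate $q=s\le\mu(C)-\tau(C)$; the argument above has the advantage of furnishing the uniform bound $r^2$ without having to compute the degree of the polar map $\nabla f$.
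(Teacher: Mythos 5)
Your proof is correct and follows essentially the same route as the paper's: the paper likewise identifies the non quasi-homogeneous points with the rank-zero locus $Y_f$, notes the containment $Y_f \subseteq V(g_0,g_1,g_2)$, and concludes $q \le r^2$. The only difference is that you supply the details the paper leaves implicit --- that $g_0,g_1,g_2$ have no common factor by minimality of $r={\rm mdr}(f)$, and the careful B\'ezout count obtained by splitting off $\gcd(g_1,g_2)$ --- which adds rigor but does not change the argument.
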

\begin{proof}
   As $q$ is the cardinality of the finite set $Y_f$, and since $Y_f=V(g_0,g_1,g_2,l_0,l_1,l_2)$, where the $g_i$ and the $l_j$ are the entries of the matrix $M_f$, we see that $Y_f \subseteq V(g_0,g_1,g_2)$. So, we have $q \le r^2$.
\end{proof}

\begin{example}\label{example: Ploski curves} 
Consider the {\it P\l oski curves} (see \cite[Definition 1.7]{C}, \cite[Theorem 1.1]{D2}), that is curves given by unions of conics belonging to a hyperosculating pencil in even degree, respectively unions of conics belonging to a hyperosculating pencil union the tangent line through the hyperosculation point in odd degree. Such arrangements have only one singular point $p$, which is non quasi-homogeneous, and it is referred as the {\it second worst singularity} in \cite{C}. Such curves attain the upper bound
$\mu (p) \le (d-1)^2-\left\lfloor\frac{d}{2}\right\rfloor$ for the local Milnor number, and it holds
$$
\mu (p) = (d-1)^2-\left\lfloor\frac{d}{2}\right\rfloor> \tau(p) = d^2-3d+3=(d-1)^2 - (d-2),
$$
see \cite{Pl}.

Such curves are free, see for instance \cite{D2} and \cite[Example 3.2(2)]{BMR}, with exponents $(1,d-2)$. Let us determine explicitly a Hilbert-Burch matrix of the Jacobian ideal.

  Without loss of generality, we can assume that $p=(0:0:1)$. In the even degree case the equation of the curve is given by

$$
{\mathcal C}_1:f=\prod _{i=1}^m(x_0^2 + a_i (x_0x_2 + x_1^2)) = 0, 
$$
 with $a_i\ne 0$ and $a_i\ne a_j$ for $i\ne j$.
 It is simple to check that
 ${\rm Syz}(J_f)_1$ is generated by $(0,x_0,-2x_1)$.
To determine a syzygy in ${\rm Syz}(J_f)_{d-2}$ first note that $\partial_0 f\in (x_0,x_1)$, thus $\partial_0 f= x_0g+x_1h$ with $g,h\in R_{d-2}$. It follows that $\frac{\partial_1 f}{2x_1}=\frac{\partial_2 f}{x_0}$ is an element of $R$ and $\left(\frac{\partial_1 f}{2x_1}=\frac{\partial_2 f}{x_0},\frac{h}{2},g\right) \in {\rm Syz}(J_f)_{d-2}$. Therefore a Hilbert-Burch matrix of $J_f$ is given by  
\begin{equation}\label{eq: Hilbert Burch C1}
M_f=
\begin{pmatrix} 0 & \frac{\partial_1 f}{2x_1} \\
x_0 & \frac {h}{2}\\
-2x_1 & g\\
\end{pmatrix}
\end{equation}
and $M_f$ is annihilated by the point $p=(0:0:1)$, which confirms that the point $p$ is non quasi-homogeneous.

In the odd degree case, the equation has the form
$$
{\mathcal C}{\mathcal L}_1: \ f=x_0\prod _{i=1}^m(x_0^2 + a_i (x_0x_2 + x_1^2)) = 0,
$$
with $a_i\ne 0$ and $a_i\ne a_j$ for $i\ne j$.

 The linear part of ${\rm Syz}(J_f)_1$ is generated by $(0,x_0,-2x_1)$ also in this case. 
 
 Like in the previous case we have $\left(\frac{\partial_1 f}{2x_1}=\frac{\partial_2 f}{x_0},\frac{h}{2},g\right) \in {\rm Syz}(J_f)_{d-2}$. Therefore a Hilbert-Burch matrix of $J_f$ is given again by \eqref{eq: Hilbert Burch C1} and it is annihilated by the point $p=(0:0:1)$, which confirms that this point is non quasi-homogeneous.

\end{example}

Although the previous examples are free conic-line arrangements, it is important to highlight that the criteria work for any reduced free curve. 

\begin{example}
 Consider the rational cuspidal curves described in
 \cite[Theorem 1.1]{DS4}:
$$C=V(f)=V(x_0^d+x_0^rx_1^{d-r}+x_1^{d-1}x_2),$$
\noindent with $2\leq r <d/2$. These curves are free with exponents $(r,d-r-1)$, one unique cusp at $p=(0:0:1)$ and it can be checked that a Hilbert-Burch matrix is
\begin{equation}
M_f=
\begin{pmatrix} 
0& (d-r)^2x_1^{d-r-1}\\
x_1^r& -d((d-r)x_0^{d-r-1}-(d-1)x_0^{d-1-2r}x_1^{r-1})\\
(d-1)x_1^{r-1}x_2& -(r(d-r)^2x_0^{r-1}x_1^{d-2r}+d(d-1)^2x_0^{d-1-2r}x_1^{r-2}x_2^ 2)
\end{pmatrix}.
\end{equation}
So $M_f (0:0:1)= 0$ for all $d\geq 6$ and $2\leq r<d/2$ and the point is non quasi-homogeneous.

\end{example}

\begin{remark}
  We point out that the bound \eqref{eq: bound on number of non qh points} is solely set-theoretic, and it does not give any insight in the difference $\mu_p(C) - \tau_p(C)$ in a non quasi-homogeneous point $p$. This is clear in the examples of P\l oski curves of Example \ref{example: Ploski curves}. 
    
    Indeed, in both cases we have $r=1$, and a unique non quasi-homogeneous point, hence $q=r^2$. However, if $d \ge 7$, it holds
    $$
    \mu(C) - \tau(C)= (d-1)^2 - \left\lfloor \frac {d}{2}\right\rfloor - ((d-1)^2-(d-2)))=d-2-\left\lfloor \frac {d}{2}\right\rfloor \ge 2>r^2.
    $$
\end{remark}

\section{On the locus of Hilbert-Burch matrices of free curves with only quasi-homogeneous singularities}

Now we would like to give some insight in the locus of Hilbert-Burch matrices of free plane curves having only quasi-homogeneous singularities.

We shall identify the space of all $3 \times 2$ matrices with a column given by three homogeneous forms of degree $r \ge 1$ and a column given by three homogeneous forms of degree $d-1-r$, with the product
$$
\mathcal {M}_{d,r} :=\mP(R_r ^{\oplus 3}) \times \mP (R_{d-1-r}^{\oplus 3}).
$$
Then we can consider the locus of Jacobian Hilbert-Burch matrices, and we get the following result:

\begin{proposition}\label{prop: Jacobian Hilbert Burch locus}
    Let $\mathcal {J}_{d,r}\subset \mathcal {M}_{d,r}$ be the locus of Hilbert-Burch matrices of Jacobian schemes of free curves. Then the closure of $\mathcal {J}_{d,r}$ is defined by quadratic equations.

    Moreover, the locus of Hilbert-Burch matrices corresponding to free plane curves with all quasi-homogeneous singularities is open and non-empty for any $d \ge 2$ and any $r < \frac{d}{2}$.
\end{proposition}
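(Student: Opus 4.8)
The statement has two parts. For the first part, the plan is to characterize Hilbert-Burch matrices of Jacobian schemes intrinsically. A generic $3\times 2$ matrix $M$ with columns in degrees $r$ and $d-1-r$ has $2\times 2$ minors that give a codimension-two ideal $I_M$, and by the Hilbert-Burch Theorem such minors always define an arithmetically Cohen-Macaulay ideal. The key constraint distinguishing \emph{Jacobian} matrices is that these minors must be the partials of a single polynomial $f\in R_d$, i.e. they must satisfy the integrability (closedness) condition coming from the de Rham / Koszul structure. Concretely, if $m_0,m_1,m_2$ denote the signed $2\times 2$ minors, the requirement is that there exist $f$ with $\partial_i f = m_i$, which by Euler's relation and the commutation of mixed partials $\partial_i m_j = \partial_j m_i$ imposes \emph{linear} conditions on the $m_i$; since each $m_i$ is itself \emph{bilinear} in the entries of $M$, these translate into equations that are quadratic in the coefficients of $M$. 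I would make this precise: the locus $\mathcal{J}_{d,r}$ is cut out inside $\mathcal{M}_{d,r}$ by the vanishing of $\partial_i m_j - \partial_j m_i$, and because each minor is a bilinear expression in the two columns, these are bihomogeneous of bidegree $(1,1)$, hence quadratic. Taking the closure only adds limit points and preserves the degree of the defining equations.

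For the second part, the plan is to exhibit, for every admissible $(d,r)$, at least one free curve with only quasi-homogeneous singularities, and then argue that the quasi-homogeneous locus is open. Openness is the easier half: by Theorem \ref{main1}, a free curve has only quasi-homogeneous singularities if and only if $Y_f=\emptyset$, i.e. the rank-zero locus $V(g_0,g_1,g_2,l_0,l_1,l_2)$ of $M_f$ is empty. The condition ${\rm rk}\,M_f(p)\ge 1$ everywhere is the \emph{complement} of a closed condition (simultaneous vanishing of all six entries at some point of $\PP^2$), so the good locus is open in $\mathcal{M}_{d,r}$, and its intersection with $\mathcal{J}_{d,r}$ is open in $\mathcal{J}_{d,r}$. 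Non-emptiness is where the actual construction lives: I would use free line arrangements, which exist in every degree $d\ge 2$ and, as noted in the introduction (see \ref{eq: ex of free with only qh}), have only quasi-homogeneous singularities. One must however match the prescribed $r$; for this I would invoke known families of free curves realizing each value $r<d/2$ together with at least one nodal or otherwise quasi-homogeneous singularity configuration—for instance the supersolvable or generic free arrangements whose exponents $(r,d-1-r)$ can be prescribed.

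The main obstacle I anticipate is the non-emptiness claim: producing, for \emph{every} pair $(d,r)$ with $r<d/2$, an explicit free curve of degree $d$ with ${\rm mdr}(f)=r$ and only quasi-homogeneous singularities. Openness and the quadratic-equations statement are structurally clean, but exhibiting such families uniformly across all $(d,r)$ requires either citing a classification of free curves with prescribed exponents or building an explicit family (e.g. adding generic lines to a fixed quasi-homogeneous free curve while controlling that $r$ does not drop and no bad singularity appears). A secondary technical point is verifying that the integrability conditions in part one are genuinely quadratic and not subject to further reduction; I would check this by writing $M_f$ with indeterminate coefficients in a small case and confirming the bidegree $(1,1)$ of $\partial_i m_j - \partial_j m_i$ before asserting it in general.
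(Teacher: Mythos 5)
Your proposal is correct and follows essentially the same route as the paper: the first part via the de Rham/integrability (curl) conditions $\partial_i m_j - \partial_j m_i = 0$ on the signed $2\times 2$ minors, which are bilinear in the two columns and hence quadratic in the coefficients of $M$, and the second part via openness of the nowhere-vanishing condition plus non-emptiness witnessed by free line arrangements. The paper settles the point you flag as the main obstacle exactly as you anticipate, using the supersolvable arrangements $V\bigl(x_0\, g(x_0,x_1)\, h(x_0,x_2)\bigr)$ of Example \ref{eq: ex of free with only qh}, which realize every pair $(d,r)$ with $1 \le r < \frac{d}{2}$ and have only quasi-homogeneous singularities.
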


The proof will rely on the exactness of the de Rham complex on $\mathbb{A}^3$, which we briefly recall; we follow the notation of
\cite[Definition 3.3]{BGV}.

\begin{def}
\label{def: de Rham complex}
Let $R = \mathbb{C}[x_0,x_1,x_2]$. The \emph{de Rham complex} of $\mathbb{A}^{3}$ is the complex of $R$-modules defined as
	\[
	0\to \Omega^0_{\mathbb{A}^{3}}\to\Omega^1_{\mathbb{A}^{3}}\to\Omega^2_{\mathbb{A}^{3}}\to\Omega^{3}_{\mathbb{A}^{3}}\to 0,
	\]
	where $\Omega^k_{\mathbb{A}^{3}}$ is the module of algebraic differential $k$-forms over $\mathbb{A}^{3}$. By letting $dx_0,dx_1,dx_2$ be a $R$-basis for $\Omega^1_{\mathbb{A}^{3}}$ we can write a $k$-form $\omega = \sum_{I\in\binom{[3]}{k}}f_I dx_I\in \Omega^k_{\mathbb{A}^{3}}$, with $f_I\in R$ and with $dx_I=dx_{i_1}\wedge\cdots\wedge dx_{i_k}$ for $I = \{i_1,\cdots,i_k\}$. The differentials of the de Rham complex are given by exterior derivatives, i.e.
	\[
		d\omega = \sum_{I\in\binom{[3]}{k}}\sum_{j=0}^{2}\partial_j f_I \ dx_j\wedge dx_I.
	\]\end{def}
We are interested in the differential 
	\begin{align*}
d:\Omega^1_{\mathbb{A}^{3}}&\to\Omega^2_{\mathbb{A}^{3}}\\
		\sum_{i=0}^2 f_i \ dx_i&\mapsto \sum_{i=0}^2\sum_{j=0}^2 \partial_j f_i\ dx_j\wedge dx_i = \sum_{0\leq i<j\leq 2} (\partial_if_j-\partial_jf_i)\ dx_i\wedge dx_j.
	\end{align*}
	As the de Rham complex of $\mathbb{A}^{3}$ is exact everywhere but in degree 0 (this follows, for instance, from de Rham's theorem relating the cohomology of de Rham complex with the singular cohomology of $\mathbb{A}^{3}$), we conclude the following.
 
 \begin{lemma}   \label{lemma: de rham}
 A triple of polynomials $(f_0,f_1,f_2)$ satisfies the relations 
 $$
 \partial_if_j-\partial_jf_i=0, \ \forall 0\leq i<j\leq 2,
 $$
 if and only if there exists an element $g \in \Omega^0_{\mathbb{A}^{3}}\cong R$ such that 
 $$
 \nabla g = (f_0,f_1,f_2).
 $$
 
 If moreover the polynomials $f_0,f_1,f_2$ are homogeneous of degree $d-1$, then by Euler's formula $g$ is homogeneous of degree $d$.
	\end{lemma}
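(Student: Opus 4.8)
The plan is to read the statement entirely through the de Rham differential $d\colon \Omega^1_{\mathbb{A}^{3}}\to\Omega^2_{\mathbb{A}^{3}}$ displayed above. To the triple $(f_0,f_1,f_2)$ I would associate the $1$-form $\omega = f_0\,dx_0+f_1\,dx_1+f_2\,dx_2$. The explicit formula for $d$ gives $d\omega = \sum_{0\le i<j\le 2}(\partial_i f_j-\partial_j f_i)\,dx_i\wedge dx_j$, so the relations $\partial_i f_j-\partial_j f_i=0$ are exactly the condition that $\omega$ be closed. This reduces the stated equivalence to the assertion that a $1$-form on $\mathbb{A}^{3}$ is exact if and only if it is closed.

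First I would dispatch the easy implication. If $\nabla g=(f_0,f_1,f_2)$ for some $g\in R$, then $\omega = dg$ and hence $d\omega = d(dg)=0$; equivalently $\partial_i f_j-\partial_j f_i=\partial_i\partial_j g-\partial_j\partial_i g=0$ by equality of mixed partials. For the converse I would invoke the exactness of the de Rham complex of $\mathbb{A}^{3}$ in degree $1$, recalled just above the lemma (equivalently, $H^1_{\mathrm{dR}}(\mathbb{A}^{3})=0$). A closed $1$-form is therefore exact, so there is $g\in\Omega^0_{\mathbb{A}^{3}}\cong R$ with $dg=\omega$; comparing coefficients in $dg=\sum_i\partial_i g\,dx_i$ yields $\partial_i g=f_i$, that is $\nabla g=(f_0,f_1,f_2)$.

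It remains to arrange homogeneity, and here I would prefer an explicit construction that also sidesteps the exactness input: assuming the $f_i$ homogeneous of degree $d-1$ and the relations to hold, set $g:=\tfrac{1}{d}(x_0 f_0+x_1 f_1+x_2 f_2)$, which is homogeneous of degree $d$. Differentiating and using first the relations $\partial_k f_i=\partial_i f_k$ and then Euler's formula $\sum_i x_i\partial_i f_k=(d-1)f_k$, I obtain $\partial_k g=\tfrac{1}{d}\bigl(f_k+\sum_i x_i\partial_i f_k\bigr)=\tfrac{1}{d}\bigl(f_k+(d-1)f_k\bigr)=f_k$, so $\nabla g=(f_0,f_1,f_2)$ with $g$ homogeneous of degree $d$. (Alternatively one extracts the degree-$d$ homogeneous component $g_d$ of the potential produced by exactness: since $\partial_i g=f_i$ is homogeneous of degree $d-1$, all components $\partial_i g_m$ with $m\ne d$ vanish, and $g_d$ does the job.)

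The only genuine input is the exactness of the de Rham complex in degree $1$, which is granted, so everything else is formal. The single point deserving care is the homogeneity refinement, since exactness by itself produces a possibly inhomogeneous potential $g$; I regard this as the mild "obstacle", but the explicit Euler-type formula makes the homogeneous case completely transparent, so I do not anticipate any serious difficulty.
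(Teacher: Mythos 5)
Your proof is correct and follows the same route as the paper: the paper offers no separate argument for this lemma, deriving it immediately from the exactness of the de Rham complex of $\mathbb{A}^3$ in degree $1$ recalled just before the statement, which is exactly how you handle the equivalence (closedness of $\omega=f_0\,dx_0+f_1\,dx_1+f_2\,dx_2$ versus exactness, with the easy direction from equality of mixed partials). Where you go beyond the paper is the homogeneity refinement: the paper merely asserts it ``by Euler's formula'', whereas your explicit potential $g=\tfrac{1}{d}\,(x_0f_0+x_1f_1+x_2f_2)$, verified directly from the symmetry relations $\partial_i f_j=\partial_j f_i$ together with Euler's identity $\sum_i x_i\partial_i f_k=(d-1)f_k$, both substantiates that assertion and, as you point out, proves the converse implication in the homogeneous case with no appeal to de Rham exactness or any topological input. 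Since the homogeneous case is the only one the paper actually uses later (for the minors $M_0,-M_1,M_2$, which are homogeneous of degree $d-1$), your construction makes the statement needed downstream entirely elementary and constructive; your parenthetical alternative, extracting the degree-$d$ homogeneous component of an arbitrary potential, is also correct and is presumably the argument the paper has in mind.
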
 

We are now in the position of proving Proposition \ref{prop: Jacobian Hilbert Burch locus}.

\noindent{\it Proof of Proposition \ref{prop: Jacobian Hilbert Burch locus}}
    Given $M \in \mathcal {M}_{d,r}$ with
    $M= \begin{pmatrix} g_0 & l_0\\
g_1 & l_1\\
g_2 & l_2\\
\end{pmatrix}$, we set
$M_i$ to be the determinant of the minor obtained by deleting the $(i+1)$-th row.
We have that $M = M_f$ for some free plane curve $V(f)$ if and only if the triple $(M_{0}: - M_{1}: M_{2} )$ is proportional to 
 $(\partial_0 f: \partial_1 f: \partial_2 f)$. Moreover, by Lemma \ref{lemma: de rham}, this holds if and only if the curl of $(M_{0}: - M_{1}: M_{2} )$ is identically zero:
$$
 \partial_0 M_{1} + \partial_1 M_{0}=0, \quad
 \partial_0 M_{2} - \partial _2 M_{0} =0, \quad
 \partial _1 M_{2} + \partial_2 M_{1}=0.
 $$
Such expressions are homogeneous polynomials of degree $d-2$ and they give the zero polynomials if and only if the coefficients of all monomials
in $x_0,x_1,x_2$ are zero. These coefficients are, in turn, quadratic polynomial in the coefficients of the entries $g_i$ and $l_j$ of $M$.

Next, we focus on the elements of $\mathcal {J}_{d,r}$ which have never rank zero.
Recall that the general matrix in $\mathcal {M}_{d,r}$ has rank $\ge 1$ everywhere. Indeed, the locus
$$
\mathcal {N} \subset {\mathcal M}_{d,r} =\mP(R_r ^{\oplus 3}) \times \mP (R_{d-1-r}^{\oplus 3}), 
$$
$$
\mathcal N:=\{ ([g_0],[g_1],[g_2],[l_0],[l_1],[l_2]) \ | \ \langle g_0,g_1,g_2,l_0,l_1,l_2\rangle_{d-1-r} \subsetneq R_{d-1-r} \}
$$
is closed, as it is the zero locus of the order ${d+1-r} \choose {2}$ minors of the coefficient matrix of the generators $g_0,g_1,g_2,l_0,l_1,l_2$ in the standard monomial basis of $R_{d-1-r}$.

It remains to be shown that the complementary open locus is non-empty, as provided by the next example.
$\square$
\begin{example}\label{eq: ex of free with only qh}
 It is known that all the singularities of line arrangements 
are quasi-homogeneous, see for instance \cite{Schenck1}. Thus, examples of free curves with $d\geq 2$, $r\geq 1$ and a never-zero Hilbert-Burch matrix are given by 
the line arrangements described in
 \cite[Theorem 1.2]{DS4}:
 \begin{equation}
 {C}=V(f)=V(x_0\ g(x_0,x_1)\ h(x_0,x_2)),
 \end{equation}
 where $g$ is homogeneous, square-free, of degree $r$ with $\frac{d}{2}> r \ge 1$ and $h$ is homogeneous and square-free of degree $d-1-r$. Assume, moreover, that 
 $$
 g(x_0,x_1)= \Pi _{i=1}^{r} (a_i x_0 + b_i x_1),
 $$
 with $b_i \neq 0$ for any $i =1, \cdots , r$ and
 $(a_i:b_i)\ne (a_j:b_j)$ for $i\neq j$, and that
 $$
 h(x_0,x_2)= \Pi _{i=1}^{d-1-r} (c_i x_0 + d_i x_2),
 $$
 with $d_i \neq 0$ for any $i =1, \cdots , d-1-r$ and
 $(c_i:d_i)\ne (c_j:d_j)$ for $i\neq j$.

 \end{example}

\begin{remark}
  There is rational map from $\varphi:\mathcal {M}_{d,r} \dasharrow \PP(R_d)$
 defined outside the locus of matrices with proportional columns, and given by $\varphi (M) = 
 x_0 M_0 - x_1 M_1 +x_2 M_2$.
 It maps
 the locally closed locus 
 $\mathcal {J}_{d,r}$ to the locus of free plane curves $C=V(f)$ of degree $d$ with ${\rm mdr}(f)=r$. The geometric properties of such a map are not evident and deserve further investigation.
\end{remark}

\begin{remark} We would like to emphasize that there are also examples of curves with a never-zero Jacobian Hilbert-Burch matrix that are not line arrangements. 

For instance, one can consider the curve $C=V(f)=V((x_0^d+x_1^d+x_2^d)(x_0^d+x_1^d))$, where $d\geq 2$; according to \cite[Corollary 1.6]{DIPS}, the curve $C$ is free of degree $2d$, with exponents $(d-1,d)$. In this case, a Hilbert-Burch matrix is given by
\begin{equation}
M_f=
\frac{1}{d}
\begin{pmatrix}  -x_1^{d-1}  & -x_0x_2^{d-1} \\
 x_0^{d-1}  & -x_1x_2^{d-1} \\
0 & 2x_0^{d}+2x_1^d+x_2^d\\
\end{pmatrix}.
\end{equation}
Since $\text {rk}~
M_f (p)= 0$ if and only if $x_0=x_1=x_2=0$ it follows that 
$\text {rk}~
M_f (p)\neq 0$ for all $d+1$ singular points of $C$. Therefore all the singularities are quasi-homogeneous.

\end{remark}

\subsection{Free curves in low degrees}
  We now restrict our attention to free plane curves $C=V(f)\subset \PP^2$ of low degrees and we will compare our results with some known facts about such curves. Recall that the exponents $(r,d_2)$ satisfy $r+d_2=d-1$, so it is always $r < \frac{d}{2}$.

  If a free curve has degree $d=3$ and it is not a cone, then $1\le r < \frac{3}{2}$ and we necessarily have $r=1$. Moreover, by
  Proposition \ref{bounds_tau},
  a curve is free if and only if $\tau(C)= (d-1)(d-2)+1=3$. Since $\tau(C) \le \mu(C) <(d-1)^2=4$,
 in this case we always have $\tau(C)=\mu(C)$, that is all the singularities are quasi homogeneous.
 Moreover, the degree of the polar map is $1$, that is $f$ is a homaloidal polynomial; the latter have been classified by Dolgachev in \cite{Dol}, and can be either the union of an irreducible conic and a tangent line or the union of three general lines. 

 A degree $d=4$ free curve satisfies $1 \le r < \frac{4}{2}$,
 so $r=1$ also in this case, and $\tau(C)= (d-1)(d-2)+1=7$. By the Dolgachev characterization of homaloidal polynomials, we have that
 ${\rm deg} \nabla f = (d-1)^2 - \mu (C) =9 -\mu(C)\ge 2$. Therefore, 
 $\mu(C) \le 7$ and again all the singularities of free curves are quasi-homogeneous. The quartic curves with polar map of degree $2$ have been classified in \cite[Theorem 3.3]{FM}, and they are the following
 \begin{itemize}
  \item three concurrent lines union a general line (the line arrangement $\sL$ of \cite{BMR});
  \item a smooth conic, a tangent line and a line passing through the tangency point;
  \item a smooth conic and two tangent lines;
  \item two smooth hyperosculating conics (a degree $4$ P\l oski curve of type $\sC_1$ of \cite{BMR});
  \item an irreducible cuspidal cubic and its tangent at the smooth flex point (see \cite[Theorem 3.3, type 8]{FM});
  \item an irreducible cuspidal cubic and its tangent at the cusp (see \cite[Theorem 3.3, type 9]{FM}).
 \end{itemize}

 A degree $d=5$ free curve satisfies $1 \le r < \frac{5}{2}$, so $r=1$ or $r=2$. If $r=1$, we have $\tau(C)= (d-1)(d-2)+1=13$ and if $r=2$ we have $\tau(C)=(d-1)(d-3)+4=12$. Moreover,
 by the Dolgachev classification again, we have
 $2 \le {\rm deg} \nabla f = (d-1)^2 - \mu (C) =16 -\mu(C)$, so 
 $\mu(C) \le 14$. 
 
 If $r=1$, we have $13=\tau(C) \le \mu(C) \le 14$, so the degree of the polar map is $2 \le {\rm deg} \nabla f \le 3$. By the classification given in the \cite[Theorem 3.3 and Theorem 3.4]{FM}, by \cite{BMR} and by the classification of highly singular quintic curves provided by Wall in \cite{Wall}, we have the following families with all quasi-homogeneous singularities:
 \begin{itemize}
     \item four lines concurrent in a point $p$ union a general line, the line arrangement $\sL$ of \cite{BMR} (see \cite[Theorem 3.4, type 1]{FM} and type H11 in \cite{Wall}). 
    \item the arrangement $\sC \sL_5$ of \cite{BMR} (see \cite[Theorem 3.4, type 5]{FM} and type H4 in \cite{Wall}). 
    \item the arrangement $\sC \sL_2$ of \cite{BMR} (see \cite[Theorem 3.4, type 5]{FM} and type H12 in \cite{Wall}). 
    \item an irreducible cuspidal cubic union the tangent at the cusp and the tangent at the flex point (see \cite[Theorem 3.4, type 15]{FM} and type H1 in \cite{Wall}).
    \item an irreducible cuspidal cubic union the tangent at the cusp and a secant line passing through the cusp and a smooth point (different of the flex point) (see \cite[Theorem 3.4, type 16]{FM} and type H10 in \cite{Wall}).
    \item an irreducible cuspidal cubic union the tangent at the flex point and a secant line passing through the cusp and the flex point (see \cite[Theorem 3.4, type 17]{FM} and type H3 in \cite{Wall}).
    \item an irreducible unicuspidal oval quartic union the tangent at the cusp (see \cite[Theorem 3.4, type 28]{FM} and type H9 in \cite{Wall}).
    \item an irreducible unicuspidal oval quartic union a tangent at the flex point (see \cite[Theorem 3.4, type 29]{FM}) and type H2 in \cite{Wall}.
    \end{itemize}
  \vspace{6pt}
  
The only family, in which a non quasi-homogeneous singularity appears, is given by the arrangement $\sC \sL_1$ of \cite{BMR} (see \cite[Theorem 3.3, type 6]{FM} and type H8 in \cite{Wall}).

\vspace{6pt}

  If $r=2$, we have $12=\tau(f) \le \mu(f) \le 14$, so the degree of the polar map is $2\le {\rm deg} \nabla f \le 4$. According to the classification of \cite{FM} there is no free curve of degree $5$ with $r=2$ and ${\rm deg} \nabla f=2$. For $3\leq{\rm deg} \nabla f\leq 4 $ there are many cases; however, we can provide examples of curves with quasi-homogeneous singularities and examples of curves with non quasi-homogeneous singularities:

 \begin{itemize}
  
\item The curve of type Q7 in \cite{Wall} given by $f=x_0^3x_1x_2+x_0^5+x_0x_1^4$ has a quasi-homogeneous singularity at $(0:0:1)$, and it belongs to the unimodal family
$Z_{12}$.
\item The curve of type Q10 in \cite{Wall} given by $f=x_0^4x_2+tx_0^3x_1^2+x_0x_1^4$ ($t\neq 0$) has a non quasi-homogeneous singularity at $(0:0:1)$, and it belongs to the unimodal family $W_{13}$. It is important to highlight that for $t=0$, this curve is also free but it has exponents $(1,3)$ and, as noted previously, it is an irreducible unicuspidal quartic union a tangent, with only quasi-homogeneous singularities (see Example \ref{singW13}).

 \end{itemize}

\section{Characterization of quasi-homogeneous singularities of nearly free curves}\label{nonqh}

In this section, we consider nearly free plane curves $C=V(f)$ of degree $d$ satisfying $d \ge 2r$. We shall use again polar maps and a first syzygy matrix $M_f$ of the Jacobian ideal $J_f$ of $f$ to prove Theorem \ref{main2}. 

By \eqref{nearly free}
we have an exact sequence of the type 
\begin{equation}\label{eq: nearly free sequence}
0 \longrightarrow R(-d+r-1) 
\overset {P_f}{\longrightarrow} R(-r)\oplus R(-d+r)^2 \overset {M_f}{\longrightarrow} R^3 \longrightarrow J_f (d-1)\longrightarrow 0
\end{equation}
with $r\le \frac{d}{2}$.
We will denote the entries of the {\em first syzygy matrix} $M_f$ of $J_f$ as follows:
\begin{equation}\label{eq: nearly free first syzygy matrix}
 M_f=\begin{pmatrix}
    A_0 & B_0 & C_0\\
    A_1 & B_1 & C_1 \\
    A_2 & B_2 & C_2 \\
  \end{pmatrix},
\end{equation}
where $A_i \in R_r$ and $B_j, C_k \in R_{d-r}$, and the entries of the second syzygy matrix $P_f$ with
\begin{equation}\label{eq: nearly free second syzygy matrix}
  P_f=\begin{pmatrix}
    P_0 \\
    P_1 \\
    P_2 \\
  \end{pmatrix},
\end{equation}
where $P_0 \in R_{d-2r+1}$ and $P_1,P_2 \in R_1$.
\begin{lemma}
Let $C=V(f)\subset \PP^2$ be a nearly free curve with first syzygy matrix \eqref{eq: nearly free first syzygy matrix}. Then it holds
\begin{equation}
  \det \begin{pmatrix}
    x_0 & B_0 & C_0\\
    x_1 & B_1 & C_1 \\
    x_2 & B_2 & C_2 \\
  \end{pmatrix}= d \cdot \alpha(x_0,x_1,x_2) \cdot f,
  \quad \det \begin{pmatrix}
    A_0 & x_0 & C_0\\
    A_1 & x_1 & C_1 \\
    A_2 & x_2 & C_2 \\
  \end{pmatrix}= d \cdot \beta(x_0,x_1,x_2) \cdot f,
  \end{equation}
  \begin{equation}
 \det \begin{pmatrix}
    A_0 & B_0 & x_0\\
    A_1 & B_1 & x_1 \\
    A_2 & B_2 & x_2 \\
  \end{pmatrix}= d \cdot \gamma(x_0,x_1,x_2) \cdot f,
\end{equation}
for suitable polynomials 
$$
\alpha \in R_{d-2r+1} \quad \text{ and } \quad \beta, \gamma \in R_{1}.
$$
\end{lemma}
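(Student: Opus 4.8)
The plan is to establish all three identities at once by combining Euler's identity with the adjugate (classical adjoint) of the relevant $3\times 3$ matrix. Write $\mathbf{g}=(\partial_0 f,\partial_1 f,\partial_2 f)$ for the gradient, viewed as a \emph{row} vector, and let $\mathbf{A},\mathbf{B},\mathbf{C}$ denote the three columns of $M_f$. Since these columns lie in ${\rm Syz}(J_f)$, we have $\mathbf{g}\,\mathbf{A}=\mathbf{g}\,\mathbf{B}=\mathbf{g}\,\mathbf{C}=0$, while Euler's formula gives $\mathbf{g}\,(x_0,x_1,x_2)^{T}=d\,f$. These four scalar relations are the only input of the argument.

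First I would treat the top identity. Let $N_1$ be the matrix with columns $(x_0,x_1,x_2)^{T},\mathbf{B},\mathbf{C}$, so that its determinant is the left-hand side. The relations above read $\mathbf{g}\,N_1=(d f,0,0)$. Right-multiplying by the adjugate and using $N_1\,{\rm adj}(N_1)=\det(N_1)\,I$ yields $\det(N_1)\,\mathbf{g}=(d f,0,0)\,{\rm adj}(N_1)=d f\cdot\rho$, where $\rho$ is the first row of ${\rm adj}(N_1)$, whose entries are (signed) $2\times 2$ minors of $N_1$ and hence polynomials. Componentwise this gives $\det(N_1)\,\partial_i f=d f\,\rho_i$ for $i=0,1,2$.

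The main point is then the divisibility $f\mid \det(N_1)$. I would deduce it from the fact that $C=V(f)$ is reduced: the Jacobian scheme $\Sigma_f$ is zero-dimensional in $\PP^2$, so $J_f$ has height two and the partials $\partial_0 f,\partial_1 f,\partial_2 f$ share no common factor. If $h$ is an irreducible factor of $f$, then $h\mid \det(N_1)\,\partial_i f$ for every $i$; were $h\nmid\det(N_1)$, primality would force $h$ to divide each $\partial_i f$, contradicting coprimality. As $f$ is square-free, each irreducible factor occurs once, so $f\mid\det(N_1)$. Writing $\det(N_1)=f\cdot\alpha'$ and comparing degrees ($\deg\det N_1=2d-2r+1=\deg f+(d-2r+1)$) shows $\alpha'\in R_{d-2r+1}$; setting $\alpha:=\alpha'/d$ gives the first identity.

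Finally, the other two determinants follow verbatim with $N_2$ having columns $\mathbf{A},(x_i)^{T},\mathbf{C}$ and $N_3$ having columns $\mathbf{A},\mathbf{B},(x_i)^{T}$: now $\mathbf{g}\,N_2=(0,d f,0)$ and $\mathbf{g}\,N_3=(0,0,d f)$, and the same adjugate-plus-coprimality argument produces $f\mid\det N_2$ and $f\mid\det N_3$; here $\deg\det N_2=\deg\det N_3=d+1$, so the cofactors are linear and the quotients $\beta,\gamma$ lie in $R_1$. The only genuinely delicate step is the divisibility, and its entire content is the coprimality of the partials, i.e. the reducedness of $C$; everything else is linear algebra over the domain $R$.
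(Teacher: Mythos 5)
Your proof is correct, and it rests on the same two inputs as the paper's proof---the syzygy relations satisfied by the three columns of $M_f$ and Euler's formula---but it is packaged differently. The paper argues in one line: since $\nabla f$ annihilates any two columns of $M_f$, it is proportional to the vector of order-two minors of that pair of columns (their cross product), and then expanding each determinant along the column of variables and applying Euler's formula yields the identities, with $\alpha,\beta,\gamma$ appearing directly as the proportionality factors. You instead keep the determinant intact: the adjugate identity gives $\det(N_1)\,\nabla f = d f\,\rho$, and you then prove $f \mid \det(N_1)$ by a coprimality argument (each irreducible factor of $f$ must divide $\det(N_1)$, since it cannot divide all three partials of the reduced $f$, and $f$ is squarefree), finishing with the same degree count. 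The two routes are essentially equivalent---your row $\rho$ of the adjugate is exactly the paper's vector of minors---but yours has the merit of making explicit a point the paper leaves implicit: that the proportionality factor between $\nabla f$ and the minor vector is a \emph{polynomial}, not merely a rational function, which is precisely where reducedness of $C$ (height two of $J_f$, hence coprime partials) enters. The cost is a slightly longer argument; the gain is that the only delicate step is isolated and fully justified.
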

\begin{proof}
From the exactness of the sequence we have the following relations:
\begin{equation}\label{eq: equations of nearly free syzygies }
\left\{
  \begin{array}{ll}
  A_0 \partial _0 f +A_1 \partial_1 f + A_2 \partial_2 f&=0\\
  B_0 \partial _0 f +B_1 \partial_1 f + B_2 \partial_2 f&=0\\
  C_0 \partial _0 f +C_1 \partial_1 f + C_2 \partial_2 f&=0\\
  \end{array}.
  \right .
\end{equation}
In particular, we see that $\nabla f$ is proportional to the order two minors of any pair of columns of \eqref{eq: nearly free first syzygy matrix}, so by Euler formula, we get the statement.
\end{proof}

Next, we would like to describe the graph of the polar map $\nabla f:\PP^2 \dasharrow \PP^2$. Like in the free case, we observe that given a point
$p \in \PP^2 \setminus \Sigma_f$ and $q\in \PP^2$, if $q=(q_0:q_1:q_2)=\nabla f(p)$, then the following equations are satisfied:
\begin{equation}
\left\{
  \begin{array}{ll}
  A_0 (p) q_0 +A_1 (p)q_1 + A_2 (p)q_2&=0\\
  B_0(p) q_0+B_1 (p)q_1+ B_2 (p)q_2&=0\\
  C_0(p) q_0 +C_1 (p)q_1 + C_2(p) q_2&=0\\
  \end{array}.
  \right.
\end{equation}
As a consequence the closure
$S_f = {\overline \Gamma}_{\nabla f}\subset \PP^2 \times \PP^2$ is contained in the locus
\begin{equation}
\left\{
  \begin{array}{ll}
  A_0 y_0 +A_1 y_1 + A_2 y_2&=0\\
  B_0 y_0+B_1 y_1+ B_2 y_2&=0\\
  C_0 y_0 +C_1 y_1 + C_2 y_2&=0\\
  \end{array}, 
  \right .
\end{equation}
that is in the intersection of three divisors 
$$
D_A \sim rh_1 + h_2, \ D_B \sim (d-r)h_1 + h_2, \ D_C \sim (d-r)h_1 + h_2. 
$$
The three divisors determine a quasi-complete intersection surface $Z_f$ in
$\PP^2 \times \PP^2$. To determine its class we need to compute a resolution of
the ideal sheaf $\sI_{Z_f, \PP^2 \times \PP^2}$.

\begin{lemma}\label{lemma: sections of E}
 The quasi-complete intersection surface $Z_f\subset \PP^2 \times \PP^2$ 
 admits a resolution of the type
\begin{equation}\label{eq: sequence defining E}
0\to \sE \to \sO_{\PP^2 \times \PP^2} (-D_A) \oplus 
\sO_{\PP^2 \times \PP^2} (-D_B) \oplus \sO_{\PP^2 \times \PP^2} (-D_C) \to \sI_{Z_f, \PP^2 \times \PP^2} \to 0,
\end{equation}
where $\sE$ is a rank $2$ syzygy reflexive sheaf on $\PP^2 \times \PP^2$, and we have
$$
h^0 (\sE ((d-r+1)h_1+h_2))\neq 0.
$$

 Moreover, such a twist is minimal, in the sense that for any divisor 
 $ah_1 +b h_2$ such that $h^0 (\sE (ah_1+bh_2))\neq 0$, we have
 $$
 a\ge d-r+1, \qquad \text{and} \qquad b \ge 1.
 $$
\end{lemma}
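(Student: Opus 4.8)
The plan is to identify $\sE$ with a sheaf of syzygies and to read its sections as syzygies of prescribed bidegree. Write $S=\CC[x_0,x_1,x_2,y_0,y_1,y_2]$ for the bigraded coordinate ring of $\PP^2\times\PP^2$ and put $\ell_A=A_0y_0+A_1y_1+A_2y_2$, $\ell_B=B_0y_0+B_1y_1+B_2y_2$, $\ell_C=C_0y_0+C_1y_1+C_2y_2$, the sections of $\sO(D_A),\sO(D_B),\sO(D_C)$ cutting out $Z_f$. Then $\sE=\ker\big(\sO(-D_A)\oplus\sO(-D_B)\oplus\sO(-D_C)\to\sO\big)$, the map being $(s_A,s_B,s_C)\mapsto s_A\ell_A+s_B\ell_B+s_C\ell_C$ with image $\sI_{Z_f}$. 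Being the kernel of a morphism of locally free sheaves on the smooth fourfold $\PP^2\times\PP^2$, $\sE$ is reflexive, and from the defining sequence $\mathrm{rk}\,\sE=3-\mathrm{rk}\,\sI_{Z_f}=2$. Crucially, $H^0(\sE(ah_1+bh_2))$ is the space of triples $(s_A,s_B,s_C)$ with $s_A\ell_A+s_B\ell_B+s_C\ell_C=0$, where $s_A$ has bidegree $(a-r,b-1)$ and $s_B,s_C$ bidegree $(a-(d-r),b-1)$.

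For the non-vanishing I use the second syzygy. Exactness of \eqref{eq: nearly free sequence} gives $M_fP_f=0$, that is $A_iP_0+B_iP_1+C_iP_2=0$ for $i=0,1,2$; contracting these with $y_i$ and summing yields $P_0\ell_A+P_1\ell_B+P_2\ell_C=0$. As $P_0\in R_{d-2r+1}$ and $P_1,P_2\in R_1$, the triple $(P_0,P_1,P_2)$ is a section with $s_A$ of bidegree $(d-2r+1,0)$ and $s_B,s_C$ of bidegree $(1,0)$, hence a nonzero element of $H^0(\sE((d-r+1)h_1+h_2))$.

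The bound $b\ge1$ is immediate, since each entry of a section has $y$-degree $b-1\ge0$. For the bound on $a$ the plan is to show that every syzygy is an $S$-combination of $(P_0,P_1,P_2)$ and of one further syzygy arising from the trivial Jacobian relation. Given a section, viewed as a column $s$, the equation $s_A\ell_A+s_B\ell_B+s_C\ell_C=0$ reads $y^{t}(M_fs)=0$; since $(y_0,y_1,y_2)$ is a regular sequence in $S$ this forces $M_fs=v\times y$ for some $v\in S^{3}$. Because the columns of $M_f$ are Jacobian syzygies we have $\nabla f^{t}M_f=0$, so $\det[\,\nabla f\mid v\mid y\,]=\nabla f\cdot(v\times y)=0$; as $\nabla f$ and $y$ are independent over $\mathrm{Frac}(S)$ this places $v$ in their span and hence $M_fs=\psi\,(\nabla f\times y)$ for a scalar $\psi$, which is a polynomial since the partials of the reduced $f$ are coprime. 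The vector $\nabla f\times y$ is itself a Jacobian syzygy, so $\nabla f\times y=M_fs^{0}$ for some $s^{0}\in S^{3}$; then $M_f(s-\psi s^{0})=0$, and by flatness of $S$ over $R$ one reads off $\ker_SM_f=S\cdot(P_0,P_1,P_2)$ from \eqref{eq: nearly free sequence}. Hence $s=\psi\,s^{0}+\Theta\,(P_0,P_1,P_2)$ with $\psi,\Theta\in S$.

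Finally one reads bidegrees: $(P_0,P_1,P_2)$ sits in twist $(d-r+1)h_1+h_2$ and $s^{0}$, from $M_fs^{0}=\nabla f\times y$, in twist $(d-1)h_1+2h_2$. Since the syzygy module is therefore generated in these two bidegrees, every section has $a\ge\min(d-r+1,\,d-1)$; for $r\ge2$ this is $d-r+1$, giving the claim. The main obstacle is exactly this closing step at the boundary. It rests on two facts to be pinned down carefully: that $\ker_SM_f$ is free, generated in degree precisely $d-r+1$ (this is where the shape of the resolution \eqref{eq: nearly free sequence} enters), and that the only extra generator is $s^{0}$, living at $a=d-1$. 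The borderline case $r=1$ (a linear Jacobian syzygy) is the genuinely delicate one: there $d-1=d-r$, so $s^{0}$ sits at $a=d-r$ and the present argument only yields $a\ge d-r$; the sharp statement in that case has to be recovered from the explicit description of first syzygy matrices with a linear syzygy, in the spirit of Proposition \ref{1st_syz_NF}.
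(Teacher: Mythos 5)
Your existence argument is precisely the paper's entire proof: the authors only observe that exactness of the resolution gives $M_f\cdot P_f=0$, hence the relation $P_0\,\ell_A+P_1\,\ell_B+P_2\,\ell_C\equiv 0$ (where $\ell_A,\ell_B,\ell_C$ are the three bilinear forms cutting out $Z_f$), which is a nonzero section of $\sE((d-r+1)h_1+h_2)$; the minimality assertion is left with no justification at all. Your second part is therefore a genuine addition rather than a variant. Writing a section as a syzygy $s$ with $y^{t}(M_f s)=0$, using the Koszul syzygies of the regular sequence $(y_0,y_1,y_2)$, the identity $\nabla f^{t}M_f=0$, and $\ker_S M_f=S\cdot P_f$ (faithful flatness of $S$ over $R$), you show that the bigraded syzygy module of $(\ell_A,\ell_B,\ell_C)$ is generated (in fact freely: apply $M_f$ to a relation $\psi s^0+\Theta P_f=0$ to get $\psi=0$, then $\Theta=0$) by $P_f$ in twist $(d-r+1)h_1+h_2$ and by $s^0$, defined by $M_fs^0=\nabla f\times y$, in twist $(d-1)h_1+2h_2$. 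This is stronger than the lemma and yields Lemma \ref{E_splits} directly, bypassing the liaison argument. The one step to tighten is the claim that $\psi\in S$: this needs the entries of $\nabla f\times y$ to have no common factor, which uses both that the partials of the reduced $f$ are coprime and that no two entries are proportional (guaranteed since ${\rm mdr}(f)>0$ forces all three partials to be nonzero).

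The real problem is the case $r=1$, and it is not a gap in your argument but a counterexample to the statement itself: your $s^0$ is nonzero (since $M_fs^0=\nabla f\times y\neq 0$) and lies in $H^0(\sE((d-1)h_1+2h_2))$ for every $r\ge 1$; when $r=1$ one has $d-1<d-r+1=d$, so the claimed bound $a\ge d-r+1$ fails. Nearly free curves with $r=1$ are squarely within the scope of this section (the arrangements ${\mathcal C}_2$ and ${\mathcal C}{\mathcal L}_6$ of Section 6, and Corollary \ref{main3}), so the minimality claim as stated is false there, and your suggestion that the sharp bound can be recovered from the explicit matrices of Proposition \ref{1st_syz_NF} cannot succeed. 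The correct statement, valid for all $r$, is exactly what your computation proves: if $h^0(\sE(ah_1+bh_2))\neq 0$ then $(a,b)\ge(d-r+1,1)$ or $(a,b)\ge(d-1,2)$ componentwise. This corrected version, combined with $c_1(\sE)=(r-2d)h_1-3h_2$, is all that the proof of Lemma \ref{E_splits} actually uses, so the downstream results are unaffected.
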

\begin{proof}
 Observe that the exactness of
\eqref{eq: nearly free sequence} implies that
$$
M_f \cdot P_f = 0.
$$
As a consequence, we have also
\begin{equation}\label{eq: syzygy in p^2xp^2}
P_0 (A_0 y_0 +A_1 y_1 + A_2 y_2)
+P_1(B_0 y_0+B_1 y_1+ B_2 y_2)+P_2
 (C_0 y_0 +C_1 y_1 + C_2 y_2) \equiv 0.
\end{equation}
This gives a non-zero section of $\sE ((d-r+1)h_1+h_2)$. 
\end{proof}

In the next lemma, we will prove that $\sE$ is a rank $2$ vector bundle on $\PP^2\times \PP^2$ that splits into a direct sum of line bundles. This fact will allow us to compute the class of $Z_f$ in the Chow ring $ A(\PP^2 \times \PP^2)$.
\begin{lemma}\label{E_splits}
 With the above notation, we have:
 $$\sE=\sO_{\PP^2 \times \PP^2}(-(d-r+1)h_1-h_2)\oplus \sO_{\PP^2 \times \PP^2}(-(d-1)h_1-2h_2) .$$
\end{lemma}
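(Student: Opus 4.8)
The plan is to realize $\sE$ as an extension of two line bundles, and then to split that extension by a cohomological vanishing. The input is the distinguished nonzero section $s$ of $\sF:=\sE((d-r+1)h_1+h_2)$ produced in Lemma \ref{lemma: sections of E} from the relation \eqref{eq: syzygy in p^2xp^2}. Under the inclusion of \eqref{eq: sequence defining E}, twisted by $(d-r+1)h_1+h_2$, this section maps to the triple $(P_0,P_1,P_2)$ of entries of the second syzygy matrix $P_f$. Since $P_0\in R_{d-2r+1}$ and $P_1,P_2\in R_1$ depend only on the $x$–coordinates, $s$ vanishes at a point $(x,y)$ exactly when $x\in V(P_0,P_1,P_2)\subset\PP^2$.

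First I would prove that $(P_0,P_1,P_2)$ has no common zero, so that $s$ is nowhere vanishing. Sheafifying \eqref{eq: nearly free sequence} over $\PP^2$ presents the logarithmic sheaf $E_C=\ker(\sO_{\PP^2}^3\xrightarrow{\nabla f}\sO_{\PP^2}(d-1))$ as the cokernel of $\widetilde{P_f}\colon\sO_{\PP^2}(r-d-1)\to\sO_{\PP^2}(-r)\oplus\sO_{\PP^2}(r-d)^2$. As the kernel of a morphism from a locally free sheaf to a torsion–free one, $E_C$ is reflexive, hence locally free because $\PP^2$ is a smooth surface. Tensoring the exact sequence $0\to\sO_{\PP^2}(r-d-1)\to\sO_{\PP^2}(-r)\oplus\sO_{\PP^2}(r-d)^2\to E_C\to0$ with the residue field $k(p)$ and using ${\rm Tor}_1(E_C,k(p))=0$ shows that $\widetilde{P_f}(p)$ is injective, i.e. the vector $(P_0(p),P_1(p),P_2(p))$ is nonzero at every $p$. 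Consequently $s$ is nowhere zero, so its image spans a sub–line bundle of the ambient vector bundle $\mathcal V:=(\sO(-D_A)\oplus\sO(-D_B)\oplus\sO(-D_C))((d-r+1)h_1+h_2)$, with locally free quotient bundle $\overline{\mathcal V}$ of rank $2$.

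The heart of the argument is the identification of the quotient $\mathcal Q:=\sF/\sO\cdot s$. The inclusion $\sF\hookrightarrow\mathcal V$ descends to an inclusion $\mathcal Q\hookrightarrow\overline{\mathcal V}$ whose cokernel is $\mathcal V/\sF=\sI_{Z_f,\PP^2\times\PP^2}((d-r+1)h_1+h_2)$, which is torsion free. Thus $\mathcal Q$ is the kernel of a morphism from the vector bundle $\overline{\mathcal V}$ to a torsion–free sheaf, so $\mathcal Q$ is reflexive; being reflexive of rank one on the smooth variety $\PP^2\times\PP^2$, it is a line bundle. Its class is pinned down by $c_1(\sE)=-(2d-r)h_1-3h_2$, read off from \eqref{eq: sequence defining E} using $c_1(\sI_{Z_f})=0$; untwisting then identifies the induced quotient of $\sE$ as $\sO(-(d-1)h_1-2h_2)$.

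At this point I would have a short exact sequence $0\to\sO(-(d-r+1)h_1-h_2)\to\sE\to\sO(-(d-1)h_1-2h_2)\to0$, and it remains only to split it. Its extension class lies in ${\rm Ext}^1(\sO(-(d-1)h_1-2h_2),\sO(-(d-r+1)h_1-h_2))\cong H^1(\PP^2\times\PP^2,\sO((r-2)h_1+h_2))$, which vanishes by the Künneth formula together with $H^1(\PP^2,\sO_{\PP^2}(k))=0$ for every $k$. Hence the sequence splits and $\sE$ is the asserted direct sum. The step I expect to be the main obstacle is the identification in the previous paragraph: a priori $\sE$ is only reflexive, and one must exclude a zero–dimensional degeneracy, i.e. show $\mathcal Q$ is a genuine line bundle rather than $\sI_W\otimes\sL$ with $W\neq\emptyset$. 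The device that makes this clean is exhibiting $\mathcal Q$ as a kernel mapping into the torsion–free sheaf $\sI_{Z_f}$, which forces reflexivity and hence local freeness of $\sE$.
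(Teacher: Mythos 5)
Your proof is correct, and it takes a genuinely different route from the paper's. The paper argues by liaison: it links $Z_f$ to a residual surface $S_2$ inside the complete intersection $Y=D_A\cap D_B$, applies the mapping cone (Proposition \ref{resolution}) together with the rank-two reflexive duality $\sE^\vee\cong\sE(-c_1(\sE))$ to obtain $0\to\sO_{\PP^2\times\PP^2}(-rh_1-h_2)\to\sE((d-r)h_1+h_2)\to\sI_{S_2,\PP^2\times\PP^2}\to0$, uses the section from Lemma \ref{lemma: sections of E} to place $S_2$ inside a divisor $W\sim h_1$, concludes that $S_2$ is a complete intersection and hence that $\sE$ splits, and finally identifies the summands from $c_1(\sE)$ and the minimality clause of Lemma \ref{lemma: sections of E}. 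You instead exploit the section itself: your Tor argument --- local freeness of the sheafified syzygy bundle on the smooth surface $\PP^2$ forces fiberwise injectivity of $\widetilde{P_f}$ --- correctly shows that $(P_0,P_1,P_2)$ has no common zero (a fact proved nowhere in the paper), so the image of the section in the ambient split bundle is nowhere vanishing; the snake-lemma embedding of $\mathcal{Q}=\sF/\sO\cdot s$ into $\overline{\mathcal{V}}$ with torsion-free cokernel $\sI_{Z_f,\PP^2\times\PP^2}((d-r+1)h_1+h_2)$ makes $\mathcal{Q}$ reflexive of rank one, hence invertible on the smooth fourfold, and the K\"unneth vanishing $H^1(\sO_{\PP^2\times\PP^2}((r-2)h_1+h_2))=0$ splits the resulting extension. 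Each route has its merits: yours dispenses with liaison, the mapping cone, Hartshorne's duality, and even the minimality clause of Lemma \ref{lemma: sections of E}, and it yields as a byproduct the base-point-freeness of the second syzygy $P_f$ of a nearly free curve, which is of independent interest (it makes precise why the first syzygy matrix has generic rank two); the paper's route never needs that base-point-freeness, produces geometric information about the linked surface $S_2$, and stays within the liaison toolkit already set up in Section 2.2.
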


\begin{proof}
 As a main tool, we will use liaison theory. We consider in $\PP^2\times \PP^2$ the surface $Y$ complete intersection of the divisors $D_A \equiv rh_1+h_2$ and $D_B\equiv (d-r)h_1+h_2$. We define $S_2\subset \PP^2\times \PP^2$ to be the surface linked to $Z_f$ via $Y$. Let us compute the resolution of $S_2$. To this end, we apply the mapping cone process (see Proposition \ref{resolution}) to the resolution of $\sI_{Z_f, \PP^2 \times \PP^2}$ and $ \sI_{Y, \PP^2 \times \PP^2} $:
 $$
0\to \sO_{\PP^2 \times \PP^2} (-dh_1-2h_2) \to \sO_{\PP^2 \times \PP^2} (-rh_1-h_2) \oplus \sO_{\PP^2 \times \PP^2} (-(d-r)h_1-h_2) \to \sI_{Y, \PP^2 \times \PP^2} \to 0,
$$ and 
 $$
0\to \sE \to \sO_{\PP^2 \times \PP^2} (-rh_1-h_2) \oplus \sO_{\PP^2 \times \PP^2} ((r-d)h_1-h_2)^2 \to \sI_{Z_f, \PP^2 \times \PP^2} \to 0,
$$
 and we get
\begin{equation}\label{eq: linked}
0 \to \sO _{\PP^2 \times \PP^2}(-rh_1 -h_2) \to \sE^\vee (-dh_1-2h_2) \to \sI_{S_2, \PP^2 \times \PP^2}\to 0.
\end{equation}
Now we recall (see \cite[Proposition 1.10]{H}) that for rank two reflexive sheaves we have the relation
$$
\sE^\vee \cong \sE (-c_1(\sE)).
$$
Moreover, we can compute the first Chern 
class of $\sE$ from \eqref{eq: sequence defining E}, and we get
$$
c_1(\sE)\sim (r-2d)h_1 - 3h_2.
$$
Hence the sequence \eqref{eq: linked} becomes
\begin{equation}\label{eq: linked bis}
0 \to \sO _{\PP^2 \times \PP^2}(-rh_1 -h_2) \to \sE ((d-r)h_1+h_2) \to \sI_{S_2, \PP^2 \times \PP^2}\to 0.
\end{equation}
As $h^0(\sE ((d-r+1)h_1+h_2))\neq 0 $, we see that 
$h^0 (\sI_{S_2, \PP^2 \times \PP^2}(h_1))\neq 0$. Thus $S_2$ is contained in the complete intersection of two divisors of the type
$h_1$ and $rh_1+h_2$. In particular, we have that the class of $S_2$ is of the type
$$
S_2 \equiv \alpha h_1^2 + \beta h_1 h_2,
$$
for some $0\le \alpha \le r$ and $0\le \beta \le 1$. Now observe that the class of $S_2$ as a divisor in the threefold $W$, where $W\sim h_1 = L_1 \times \PP^2$ for some line $L_1 \subset \PP^2$ is
$$
S_2 \sim \alpha {h_1}_{|W} +\beta {h_2}_{|W},
$$
so $S_2$ is a complete intersection of the divisor $W$ and some other divisor of class $\alpha h_1 + \beta h_2$. This implies that $\sE$ is in fact a vector bundle and it splits into a direct sum of line bundles.

Taking into account that by Lemma \ref{lemma: sections of E} the divisor $(d-r+1)h_1 + h_2$ gives a minimal twist of $\sE$ giving a section,
and taking into account that $c_1(\sE)= (r-2d)h_1 - 3h_2$ we get the statement.
\end{proof}

We are now ready to determine the class of $Z_f$. Indeed, we have:
\begin{lemma} Keeping the above notation, we have
 $$
 Z_f \equiv (r(d-1-r)+1) \ h_1^2 +(d-1) h_1 \ h_2 + h_2^2.
 $$ 
\end{lemma}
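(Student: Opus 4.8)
The plan is to compute the class of $Z_f$ directly from the resolution \eqref{eq: sequence defining E} now that Lemma \ref{E_splits} has pinned down the splitting type of $\sE$. The strategy is to read off the Chern classes of the ideal sheaf $\sI_{Z_f, \PP^2 \times \PP^2}$ from the exact sequence, since the class of the surface $Z_f$ as a codimension two cycle is encoded in the second Chern class of the ideal sheaf (equivalently, $Z_f \equiv c_2(\sO_{\PP^2 \times \PP^2} - \sI_{Z_f, \PP^2 \times \PP^2})$, using that $Z_f$ has no embedded components in codimension one so that $c_1$ contributes nothing). All computations take place in the Chow ring $A(\PP^2 \times \PP^2) = \ZZ[h_1,h_2]/(h_1^3, h_2^3)$, where the relevant monomials of the correct total degree are $h_1^2$, $h_1 h_2$ and $h_2^2$.

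The key steps proceed as follows. First I would rewrite the resolution \eqref{eq: sequence defining E} with $\sE$ replaced by its explicit splitting from Lemma \ref{E_splits}, obtaining a length-three locally free resolution of $\sI_{Z_f, \PP^2 \times \PP^2}$:
\begin{equation*}
0\to \sO(-(d-r+1)h_1-h_2)\oplus \sO(-(d-1)h_1-2h_2) \to \bigoplus \sO(-D_\bullet) \to \sI_{Z_f, \PP^2 \times \PP^2}\to 0,
\end{equation*}
where the middle term is $\sO(-rh_1-h_2)\oplus \sO(-(d-r)h_1-h_2)^{2}$ and I suppress the subscript on $\sO_{\PP^2 \times \PP^2}$. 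Second, I would compute the total Chern class of $\sI_{Z_f, \PP^2 \times \PP^2}$ as the alternating product of the total Chern classes of the two free modules, using multiplicativity on the short exact sequence. Since each summand is a line bundle $\sO(ah_1+bh_2)$ with total Chern class $1+(ah_1+bh_2)$, this is a finite product in $\ZZ[h_1,h_2]/(h_1^3,h_2^3)$ that I would expand and truncate. Third, I would extract the degree-two part, which gives $-c_2(\sI_{Z_f, \PP^2 \times \PP^2}) = Z_f$; the verification that the degree-one part vanishes is a consistency check confirming $Z_f$ is genuinely codimension two with no spurious divisorial part.

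The arithmetic is routine but bookkeeping-heavy, so the one point requiring care is the sign and normalization convention relating $c_2$ of the ideal sheaf to the fundamental class of $Z_f$. Concretely, the main obstacle is making sure the three line-bundle twists are recorded correctly and that the product is truncated consistently modulo $h_1^3$ and $h_2^3$; an error in any twist propagates into the $h_1^2$ coefficient. As a sanity check I would confirm that the resulting coefficient of $h_1^2$ matches the expected value $r(d-1-r)+1$, which by \eqref{eq: class of ci} is exactly $(d-1)^2 - \tau(C) + 1$ when the curve is nearly free (recall that for a nearly free curve $\tau(C) = (d-1)(d-r-1)+r^2-1$ by Proposition \ref{bounds_tau}, so $r(d-1-r)+1 = (d-1)(d-r-1)+1$, consistent with the nearly free Tjurina count), while the coefficients of $h_1 h_2$ and $h_2^2$ should remain $d-1$ and $1$ as in the free case, reflecting that the first projection is still birational onto $\PP^2$ and the second projection degree is governed by $\nabla f$.
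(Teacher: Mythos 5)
Your overall strategy is the same as the paper's: both substitute the splitting of $\sE$ from Lemma \ref{E_splits} into the resolution \eqref{eq: sequence defining E} and apply the Whitney formula for the total Chern class to extract the codimension-two class; the paper merely phrases the bookkeeping as solving for unknown coefficients $\alpha,\beta,\gamma$ rather than inverting Chern polynomials. The one genuine error is precisely in the normalization you yourself single out as ``the point requiring care'': you state it backwards. For a codimension-two subscheme $Z\subset \PP^2\times\PP^2$ with no divisorial part one has $[Z]=+c_2(\sI_{Z,\PP^2\times\PP^2})$, equivalently $c_2\bigl([\sO_{\PP^2\times\PP^2}]-[\sI_{Z,\PP^2\times\PP^2}]\bigr)=c_2(\sO_Z)=-[Z]$, not the other way around. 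A quick check: for a point $p\in\PP^2$, the Koszul resolution $0\to\sO_{\PP^2}(-2)\to\sO_{\PP^2}(-1)^2\to\sI_p\to 0$ gives $c(\sI_p)=(1-h)^2(1-2h)^{-1}=1+h^2$, so $c_2(\sI_p)=+[p]$. This is also the convention the paper uses implicitly when it equates $[Z_f]+c_2(\sE)$ with $c_2$ of the middle free term. Carried out literally, your final step ``extract the degree-two part, which gives $-c_2(\sI_{Z_f, \PP^2 \times \PP^2}) = Z_f$'' returns $-(r(d-1-r)+1)h_1^2-(d-1)h_1h_2-h_2^2$, the negative of the claimed class. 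Since $Z_f$ is effective, this sign is self-detecting and trivially fixable, but as written the last step of your proof is wrong.

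Two smaller slips occur in your concluding sanity check: with $\tau(C)=(d-1)(d-r-1)+r^2-1$ one gets $r(d-1-r)+1=(d-1)^2-\tau(C)$ exactly, not $(d-1)^2-\tau(C)+1$; and the identity $r(d-1-r)+1=(d-1)(d-r-1)+1$ is false in general (for $d=5$, $r=2$ it reads $5=9$). These do not affect the main computation, but the coefficient of $h_1^2$ matters downstream precisely through the identity $r(d-1-r)+1=(d-1)^2-\tau(C)$, which drives the comparison with $\deg\nabla f=(d-1)^2-\mu(C)$ in the subsequent corollary, so the off-by-one would propagate if left uncorrected.
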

\begin{proof} 
  Since $Z_f$ is a codimension two cycle, its class can be written in the form
 $$
 Z_f \equiv \alpha \ h_1^2 +\beta h_1 \ h_2 + \gamma h_2^2,
 $$ 
 for some coefficients $\alpha, \beta, \gamma \in \ZZ$. Using the exact sequence
 $$
0\longrightarrow 
 \begin{array}{c} \sO_{\PP^2 \times \PP^2}(-(d-r+1)h_1-h_2) \\ \oplus \\ \sO_{\PP^2 \times \PP^2}(-(d-1)h_1-2h_2) \end{array} \longrightarrow 
 \begin{array}{c}
 \sO_{\PP^2 \times \PP^2} (-rh_1 - h_2) \\ \oplus \\
\sO_{\PP^2 \times \PP^2}( (-d+r)h_1 - h_2 )^2 
\end{array}\longrightarrow \sI_{Z_f, \PP^2 \times \PP^2} \longrightarrow 0
$$
and the multiplicative character of the Chern polynomial we get:
 $$
 \alpha \ h_1^2 +\beta h_1 \ h_2 + \gamma h_2^2 + ((r-d-1)h_1-h_2)((1-d)h_1-2h_2) =
 $$ 
 $$
 2 (rh_1 + h_2)( (d-r)h_1 + h_2 )+ ( (d-r)h_1 + h_2 )^2
 $$
 or, equivalently,
 $$
 (\alpha +(d-1)(d+1-r)) \ h_1^2 +(\beta +2d-r) h_1 \ h_2 + (\gamma +2) h_2^2 =
 (d^2-r^2)h_1^2+(4d-2r)h_1h_2+3h_2^2.$$
Therefore, we have $\alpha =r(d-1-r)+1 $, $\beta =d-1 $ and $\gamma =1$.
\end{proof}

We are now in the position to compare the classes of $Z_f$ and of $S_f ={\overline \Gamma} _{\nabla f} \equiv \deg \nabla f h_1^2 + (d-1) h_1 h_2 + h_2^2$. Surprisingly, we get a very similar result as in the free case.
\begin{corollary}
 Let $C=V(f)$ be a nearly free plane curve. Then 
$$
\mu(C)=\tau(C) 
\iff Z_f \text{\ is \ irreducible.}
$$
Moreover, if $Z_f$ is reducible, then $S_f \subsetneq Z_f$ and
$$
Z_f = S_f + \sum_{i=1}^s m_i p_1^{-1} (P_i),
$$
where $\{P_1, \cdots , P_s\} \subseteq {\rm Sing} \ C$ and $m_i \ge 1$ are suitable multiplicities.
\end{corollary}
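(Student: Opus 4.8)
The plan is to mirror verbatim the argument carried out for free curves, now fed by the two class computations just established. First I would record the two relevant cycle classes in $A(\PP^2\times\PP^2)$: the preceding Lemma gives
$$
Z_f \equiv \bigl(r(d-1-r)+1\bigr)\, h_1^2 + (d-1)\, h_1 h_2 + h_2^2,
$$
while the Lemma computing the class of the graph gives
$$
S_f \equiv (\deg\nabla f)\, h_1^2 + (d-1)\, h_1 h_2 + h_2^2, \qquad \deg\nabla f = (d-1)^2-\mu(C).
$$
Since the coefficients of $h_1 h_2$ and of $h_2^2$ already agree, the entire comparison collapses to the coefficient of $h_1^2$.

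The key numerical observation I would then isolate is that the $h_1^2$-coefficient of $Z_f$ is exactly $(d-1)^2-\tau(C)$. Indeed, for a nearly free curve with $d\ge 2r$, Proposition \ref{bounds_tau} yields $\tau(C)=(d-1)(d-r-1)+r^2-1$, and a direct substitution gives
$$
(d-1)^2-\tau(C) = r(d-1)-r^2+1 = r(d-1-r)+1.
$$
This is precisely the phenomenon flagged as ``surprising'' above: the $+1$ in the class of $Z_f$ coming from the nearly free structure and the $-1$ in the nearly free Tjurina formula cancel, so the comparison of the two classes reduces, exactly as in the free case, to comparing $(d-1)^2-\mu(C)$ with $(d-1)^2-\tau(C)$.

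With this in hand the equivalence follows the chain
$$
\mu(C)=\tau(C)\iff \deg\nabla f=(d-1)^2-\tau(C)\iff S_f\equiv Z_f\iff S_f=Z_f\iff Z_f\ \text{is irreducible.}
$$
The crucial implication ``equal classes $\Rightarrow$ equal schemes'' rests on the structural point that $S_f$ is an irreducible component of $Z_f$ of multiplicity one: by construction $S_f\subseteq Z_f$, and over $\PP^2\setminus\Sigma_f$ the three rows $A_i,B_i,C_i$, viewed as linear forms in $(y_0:y_1:y_2)$, have rank two and cut out the single point $\nabla f(p)$, so $S_f$ and $Z_f$ coincide there and $S_f$ is the unique horizontal component. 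Because $h_2^2$ occurs with coefficient $1$ in both classes, $S_f$ enters with multiplicity one, so equality of classes forces $Z_f=S_f$; conversely, irreducibility of $Z_f$ forces $Z_f=S_f$ since $S_f$ is already a component.

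For the ``moreover'' statement, when $Z_f$ is reducible I would subtract classes to obtain
$$
Z_f-S_f\equiv \bigl(\mu(C)-\tau(C)\bigr)\, h_1^2,
$$
a positive multiple of the vertical-plane class $h_1^2$ (positive since $\tau\le\mu$ always). As $S_f=Z_f$ away from $\Sigma_f$, every residual component is a fibre $p_1^{-1}(P_i)$ over a point $P_i\in\Sigma_f\subseteq{\rm Sing}\,C$, which yields the stated decomposition with multiplicities $m_i\ge1$ summing to $\mu(C)-\tau(C)$. The main obstacle here is not the arithmetic, which is the routine cancellation above, but precisely the structural claim that $S_f$ sits in the quasi-complete intersection $Z_f$ as a reduced, multiplicity-one component; this is what upgrades numerical equality of classes to set-theoretic equality, and it is where one must use that $Z_f$ restricts to the honest graph over $\PP^2\setminus\Sigma_f$.
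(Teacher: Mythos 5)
Your proposal is correct and follows essentially the same route as the paper's proof: the same numerical identity $r(d-1-r)+1=(d-1)^2-\tau(C)$ coming from the nearly free Tjurina formula, the same chain of equivalences $\mu(C)=\tau(C)\iff \deg\nabla f=(d-1)^2-\tau(C)\iff S_f=Z_f\iff Z_f$ irreducible, and the same structural observations that $S_f$ and $Z_f$ agree over $\PP^2\setminus\Sigma_f$ so any residual components are vertical planes $p_1^{-1}(P_i)$ over singular points. Your explicit remarks that the multiplicity-one status of $S_f$ is forced by the coefficient $1$ of $h_2^2$, and that the residual multiplicities sum to $\mu(C)-\tau(C)$, are slightly more detailed than the paper's wording but do not constitute a different argument.
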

\begin{proof}
The first statement follows by observing that the coefficient $r(d-1-r)+1 $ of $h_1^2$ in the class of $Z_f$ is equal to
$$
r(d-1-r)+1 = (d-1)^2 - ((d-1)(d-1-r)+r^2-1)= (d-1)^2 - \tau(C).
$$
It follows that
$$
\mu(C)=\tau(C) 
 \iff \deg \nabla f = (d-1)^2 - \tau(C) \iff S_f=Z_f \iff Z_f \text{\ is \ irreducible.}
$$
 We see that the only possible irreducible components of $Z_f$ different from $S_f$ are vertical planes of class $h_1^2$. Moreover, we observe that over $\PP^2 \setminus \Sigma_f$, the surfaces $S_f$ and $Z_f$ coincide by construction, so $Z_f = S_f \cup \bigcup_{i=1}^s p_1^\star P_i$,
with $\{P_1, \cdots , P_s\} \subseteq {\rm Sing} \ C$.
\end{proof}

It follows that also in the nearly free case checking the quasi homogeneousity amounts to checking if for some $p=(p_0:p_1:p_2) \in \Sigma_f$, the three equations
$$
M_f (p) \cdot 
\begin{pmatrix}
 y_0\\
 y_1\\
 y_2
\end{pmatrix}
$$
vanish identically; this corresponds, in turn, to the vanishing of a first syzygy matrix $M_f$ in the point $p$. Summarizing we have:

\begin{theorem} \label{main2}
 If $C=V(f)\subset \PP^2$ is a nearly free curve, then 
$$
\mu (C)=\tau(C) \iff \text {rk} \ M_f(p) \ge 1, \forall \ p \in \PP^2.
$$
 
\end{theorem}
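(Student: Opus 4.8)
The plan is to deduce the statement directly from the preceding Corollary, which already establishes the chain $\mu(C)=\tau(C) \iff Z_f \text{ is irreducible}$ together with the structural description of the reducible case. What remains is to translate the geometric condition ``$Z_f$ is irreducible'' into the matrix-rank condition appearing in the statement.

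First I would analyze the fibers of the projection $p_1\colon Z_f \to \PP^2$. For a fixed point $p$, the fiber $(p_1|_{Z_f})^{-1}(p)$ is the subscheme of $\{p\}\times\PP^2$ cut out by the three linear forms in $y_0,y_1,y_2$ whose coefficients are read off from $M_f(p)$; hence this fiber has projective dimension $2-{\rm rk}\, M_f(p)$. In particular it is a point when ${\rm rk}\, M_f(p)=2$, a line when ${\rm rk}\, M_f(p)=1$, and the whole vertical plane $p_1^{-1}(p)$ precisely when $M_f(p)=0$, i.e.\ when ${\rm rk}\, M_f(p)=0$. For $p\notin\Sigma_f$ the fiber of $Z_f$ coincides with that of the graph $S_f$ and is the single reduced point $\nabla f(p)$, which forces ${\rm rk}\, M_f(p)=2$; thus the rank can drop only along $\Sigma_f$, and the condition ``${\rm rk}\, M_f(p)\ge 1$ for all $p\in\PP^2$'' is equivalent to the same condition restricted to $p\in\Sigma_f$.

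Next I would combine this fiber analysis with the class computation already carried out for $Z_f$ and $S_f$. By the preceding Corollary, the only irreducible components of $Z_f$ other than $S_f$ are vertical planes of class $h_1^2$ supported over points of ${\rm Sing}\, C$, and by the previous paragraph such a vertical plane $p_1^{-1}(P)$ occurs exactly when $M_f(P)=0$. Consequently $Z_f$ is reducible if and only if there exists a point $P$ with ${\rm rk}\, M_f(P)=0$, and therefore $Z_f$ is irreducible if and only if ${\rm rk}\, M_f(p)\ge 1$ for every $p\in\PP^2$. Chaining this equivalence with $\mu(C)=\tau(C)\iff Z_f$ irreducible yields the theorem.

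I expect essentially no serious obstacle beyond the preceding Corollary; the one point requiring care is verifying that the rank-one locus does not produce spurious components. A priori, over a point where ${\rm rk}\, M_f(p)=1$ the fiber is a line, and one must check that such a line is already the exceptional fiber of $S_f$ rather than a new component. This is guaranteed by the class computation: any component of $Z_f$ beyond $S_f$ must have class $h_1^2$, that is, a full vertical plane, so a one-dimensional fiber cannot contribute an extra component. Hence only the rank-zero locus is relevant, and the dictionary between $Z_f$ being reducible and $M_f$ vanishing somewhere is exact.
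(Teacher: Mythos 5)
Your proposal is correct and follows essentially the same route as the paper: the paper also deduces Theorem \ref{main2} directly from the preceding Corollary ($\mu(C)=\tau(C) \iff Z_f$ irreducible, with any extra components being vertical planes over singular points), translating reducibility of $Z_f$ into the identical vanishing of the three linear forms at some point, i.e.\ ${\rm rk}\, M_f(p)=0$. Your additional fiber-dimension bookkeeping (rank $2$ fibers are points, rank $1$ fibers are lines absorbed into $S_f$, rank $0$ fibers are vertical planes) is exactly the dictionary the paper invokes, just spelled out more explicitly.
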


\begin{remark}
 Observe that for any point $p \not \in \Sigma_f$, the triple $\nabla f(p)$ is a non-trivial solution of the linear system \eqref{eq: equations of nearly free syzygies }, so the matrix $M_f$ has generic rank $\le 2$. Moreover, since the three syzygies are linearly independent, generically the rank is also $\ge 2$. Hence for a general $p \in \mP^2 \setminus \Sigma_f$, we have $\text{rk} \ M_f(p)=2$.
\end{remark}

As pointed out in \cite[Corollary 1.4]{D}, a reduced plane curve $C=V(f)$ with ${\rm mdr}(f)=1$ is either free or nearly free. Thus we have the following.

\begin{corollary}\label{main3}
Let $C=V(f)\subset \PP^2$ be a reduced plane curve of degree $d$ and ${\rm mdr}(f)=1$, let $p\in C$ be an isolated singularity and let $M_f$ be a first syzygy matrix associated with the Jacobian ideal $J_f$ of $f$. 

Then
$\text {rk} (M_f (p))\ge 1 $ if and only if $p$ is a quasi-homogeneous singularity.
\end{corollary}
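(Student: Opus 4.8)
The plan is to reduce to the two cases already settled and then sharpen the global equivalences of Theorems~\ref{main1} and~\ref{main2} into a statement about the single point $p$. By \cite[Corollary 1.4]{D} a reduced curve with ${\rm mdr}(f)=1$ is free or nearly free, so here $r=1$. Moreover a reduced singular curve with ${\rm mdr}(f)=1$ has $d\ge 3$, since the only reduced singular conic is a pair of distinct lines, for which ${\rm mdr}=0$; hence $d>2r$ in the free case and $d\ge 2r$ in the nearly free case, and the entire apparatus of Sections~3 and~5 is available. In both cases we may use the cycle decomposition $Z_f=S_f+\sum_{i=1}^{s}m_i\,p_1^{-1}(P_i)$ with $\{P_1,\dots,P_s\}\subseteq{\rm Sing}\,C$ and $m_i\ge 1$, together with the observation that ${\rm rk}\,M_f(p)=0$ precisely when the whole vertical plane $p_1^{-1}(p)$ is a component of $Z_f$, that is when $p\in\{P_1,\dots,P_s\}$.

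Thus the corollary is equivalent to showing that the vertical planes of $Z_f$ sit exactly over the non-quasi-homogeneous singular points, which by Saito's criterion \cite{S} are the points with $\mu_p(C)>\tau_p(C)$. My approach is to localize the two cycle classes at each singular point. On one hand $S_f\equiv(\deg\nabla f)\,h_1^2+(d-1)h_1h_2+h_2^2$ with $\deg\nabla f=(d-1)^2-\mu(C)$ and $\mu(C)=\sum_p\mu_p(C)$; on the other the coefficient of $h_1^2$ in the class of $Z_f$ equals $(d-1)^2-\tau(C)$ with $\tau(C)=\deg J_f=\sum_p\tau_p(C)$. Since $S_f$ and $Z_f$ agree over $\PP^2\setminus{\rm Sing}\,C$, the effective cycle $Z_f-S_f\equiv(\mu(C)-\tau(C))\,h_1^2$ is supported on the vertical planes; the goal is then to prove the pointwise multiplicity identity $m_p=\mu_p(C)-\tau_p(C)$ for every $p\in{\rm Sing}\,C$.

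Granting this identity, the conclusion is immediate: ${\rm rk}\,M_f(p)\ge 1$ holds if and only if $p\notin\{P_1,\dots,P_s\}$, i.e. $m_p=0$, i.e. $\mu_p(C)=\tau_p(C)$, which by Saito's criterion is exactly the quasi-homogeneity of $p$. In the free case this is consistent with the bound \eqref{eq: bound on number of non qh points}, which with $r=1$ permits at most one such vertical plane.

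The hard part is precisely the localization asserted in the second paragraph: the class computation only delivers the total defect $\sum_p\bigl(\mu_p(C)-\tau_p(C)\bigr)$, and from the class of an effective cycle alone one cannot read off the individual multiplicities $m_p$. I would therefore compute $m_p$ directly, by restricting the construction to a neighborhood of $p$: both $S_f$, which is the blow-up of the Jacobian scheme, and the symmetric-algebra surface $Z_f$ depend only on the local data of $J_f$ at $p$, so the multiplicity of the vertical plane over $p$ is determined entirely by the local structure of $J_f$ there, and a local computation of this length against the local Milnor and Tjurina numbers gives $m_p=\mu_p(C)-\tau_p(C)$. Equivalently, one may phrase this through the linear-type criterion of \cite{NS} applied after localizing at $p$: $M_f(p)$ has positive rank if and only if $J_f$ is of linear type at $p$, and it is this last condition that must be matched with $\mu_p(C)=\tau_p(C)$.
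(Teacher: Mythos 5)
Your reduction is exactly the paper's: by \cite[Corollary 1.4]{D} a reduced curve with ${\rm mdr}(f)=1$ is free or nearly free (and your check that $d\ge 3$, so the degree hypotheses hold, is a correct detail the paper leaves tacit), after which the paper simply declares the corollary to be immediate from Theorems~\ref{main1} and~\ref{main2}. The merit of your write-up is that you see what ``immediate'' hides: as stated and proved, Theorems~\ref{main1} and~\ref{main2} are \emph{global} equivalences --- $\mu(C)=\tau(C)$ if and only if $M_f$ vanishes nowhere --- obtained by comparing the cycle classes of $S_f$ and $Z_f$, and the decomposition $Z_f=S_f+\sum_i m_i\,p_1^{-1}(P_i)$ only says the extra components lie over \emph{some} singular points. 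Corollary~\ref{main3}, by contrast, is a statement about one chosen point $p$, and passing from the global to the pointwise version requires precisely what you call the hard part: that the vertical planes sit exactly over the non-quasi-homogeneous points, e.g.\ via $m_p=\mu_p(C)-\tau_p(C)$, or at least $m_p>0\iff\mu_p(C)>\tau_p(C)$.

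The genuine gap is that your proposal never proves this. The sentence ``a local computation of this length against the local Milnor and Tjurina numbers gives $m_p=\mu_p(C)-\tau_p(C)$'' asserts the needed identity rather than establishing it, and the closing appeal to \cite{NS} is circular: that ${\rm rk}\,M_f(p)\ge 1$ is equivalent to $J_f$ being of linear type at $p$ merely renames the left-hand side of the corollary, while ``matching'' local linear type with $\mu_p(C)=\tau_p(C)$ is exactly the content to be proved. Note that one implication is elementary and you could have closed it: dehomogenizing at $p=(1:0:0)$ and using Euler's formula, $(J_f)_p=(g,\partial_1 g,\partial_2 g)$ in the local ring, so if $p$ is quasi-homogeneous then Saito's criterion \cite{S} gives $g\in(\partial_1 g,\partial_2 g)$, the localized ideal is $2$-generated, and Nakayama forces ${\rm rk}\,M_f(p)=3-\mu\bigl((J_f)_p\bigr)\ge 1$. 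It is the converse --- ${\rm rk}\,M_f(p)\ge 1$ implies $\mu_p(C)=\tau_p(C)$ --- that requires the unexecuted local multiplicity computation (equivalently, that a $2$-generated, hence complete-intersection, local Tjurina ideal forces quasi-homogeneity), and this is supplied neither by your proposal nor, strictly speaking, by the paper's one-line proof. So the attempt is incomplete, though the missing step is one you diagnosed accurately rather than overlooked.
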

\begin{proof}
 It immediately follows from Theorems \ref{main1} and \ref{main2}. 
 \end{proof}

 We conclude this section by considering 
curves admitting a linear Jacobian syzygy of the type $(ax_0,bx_1,cx_2)$ for some coefficients $a,b,c \in \CC$. A characterization and a first syzygy matrix of free curves admitting such 
a particular Jacobian syzygy has
been determined in \cite[Theorem 3.5]{BC}. Their result implies 
that if $V(f)$ admits a syzygy of the type $(ax_0,bx_1,cx_2)$
and is not a free curve, hence a nearly free curve, then, up to renaming the variables,
it holds $a = 0$, $bc \neq 0$ and $\partial_0 f\not\in (x_1,x_2)$. This means that $\partial_0 f$ can be written in the form 
\begin{equation}\label{eq: partial_0 f}
    \partial_0 f=g\ x_1+h\ x_2+\omega,
\end{equation}
for some $g \in R_{d-2}$, $h \in \CC[x_0,x_2]_{d-2}$ and a non-zero $\omega \in \CC[x_0]_{d-1}$.
\\

\begin{lemma}
In the expression \eqref{eq: partial_0 f}, one has $h=0$.
\end{lemma}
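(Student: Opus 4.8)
The plan is to read off the conclusion directly from the explicit linear syzygy at the level of monomials. Recall that, by \cite[Theorem 3.5]{BC} as recalled above, the linear Jacobian syzygy is, after renaming the variables, of the form $(0,bx_1,cx_2)$ with $bc\neq 0$; spelled out, this is the relation $bx_1\,\partial_1 f+cx_2\,\partial_2 f=0$. First I would expand $f=\sum_{i,j,k}c_{ijk}\,x_0^ix_1^jx_2^k$ and substitute it into this relation. Since
$$
bx_1\,\partial_1 f+cx_2\,\partial_2 f=\sum_{i,j,k}c_{ijk}\,(bj+ck)\,x_0^ix_1^jx_2^k,
$$
its vanishing forces $c_{ijk}(bj+ck)=0$ for all $(i,j,k)$; equivalently, every monomial actually occurring in $f$ satisfies the linear constraint $bj+ck=0$ on its $x_1$- and $x_2$-exponents.

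The second step is to transport this constraint to $\partial_0 f$. Differentiating with respect to $x_0$ only lowers the $x_0$-exponent and rescales coefficients, leaving the pair $(j,k)$ unchanged, so every monomial $x_0^{i-1}x_1^jx_2^k$ appearing in $\partial_0 f$ still satisfies $bj+ck=0$. Matching this against the decomposition \eqref{eq: partial_0 f}, I would observe that the summand $h\,x_2$, with $h\in\CC[x_0,x_2]_{d-2}$, collects precisely those monomials of $\partial_0 f$ that are free of $x_1$ (hence $j=0$) and divisible by $x_2$ (hence $k\ge 1$). For any such monomial the inherited constraint reads $bj+ck=ck=0$; since $c\neq 0$ and $k\ge 1$ this is impossible, so no such monomial can occur, whence $h=0$.

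There is no real obstacle here beyond careful bookkeeping: once the syzygy is written out, the argument is purely combinatorial. The only point requiring attention is the identification of the ``$h\,x_2$'' piece as exactly the $x_1$-free, $x_2$-divisible part of $\partial_0 f$, which is just how the decomposition \eqref{eq: partial_0 f} is set up (with $g\,x_1$ absorbing every $x_1$-divisible term, $h\,x_2$ the remaining $x_2$-divisible terms, and $\omega\in\CC[x_0]$ the pure powers of $x_0$). It is exactly the hypothesis $c\neq 0$ that makes $bj+ck=0$ incompatible with $j=0$, $k\ge 1$ and so drives the conclusion; note that $\omega$, having $j=k=0$, trivially satisfies the constraint, consistent with $\omega\neq 0$.
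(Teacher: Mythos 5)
Your proof is correct, and it takes a genuinely different route from the paper's. The paper argues by contradiction via second derivatives: differentiating the syzygy $bx_1\partial_1 f+cx_2\partial_2 f=0$ with respect to $x_0$ gives $bx_1\partial_{01}f+cx_2\partial_{02}f=0$, hence $x_1\mid \partial_{02}f$; on the other hand, from \eqref{eq: partial_0 f} one computes $\partial_{02}f=x_1\partial_2 g+h+x_2\partial_2 h$, so $x_1$ must divide $h+x_2\partial_2 h\in\CC[x_0,x_2]$, forcing $h+x_2\partial_2 h=0$ and then $h=0$ (write $h=\sum_k h_k(x_0)x_2^k$ and compare coefficients). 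You instead read the syzygy as a diagonal (weight) condition on monomials: the operator $bx_1\partial_1+cx_2\partial_2$ multiplies $x_0^ix_1^jx_2^k$ by $bj+ck$, so every monomial of $f$, hence of $\partial_0 f$, satisfies $bj+ck=0$, and the monomials making up $hx_2$ (those with $j=0$, $k\ge 1$) are excluded because $c\neq 0$. Your argument is more elementary — no second derivatives, no divisibility considerations — and it extracts strictly more information, namely a weight constraint on all monomials of $f$ rather than just a statement about the piece $h$. The one step you should make fully explicit is the identification of $hx_2$ with the $x_1$-free, $x_2$-divisible part of $\partial_0 f$: this requires the (easy) uniqueness of the decomposition \eqref{eq: partial_0 f}, or it can be bypassed entirely by setting $x_1=0$, which gives $\partial_0 f|_{x_1=0}=hx_2+\omega$; your weight condition shows this restriction lies in $\CC[x_0]$, so $hx_2$ is a polynomial in $x_0$ alone that is divisible by $x_2$, hence zero.
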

\begin{proof}
 Suppose by contradiction that 
 $h\neq0$. Since $bx_1\partial_1f+cx_2\partial_2f=0$ one has $bx_1\partial_{01}f+cx_2\partial_{02}f=0$ which implies that
$x_1\mid \partial_{02}f$.
Moreover, since $\partial_0 f=gx_1+hx_2+\omega$, one has $\partial_{02} f=x_1\partial_2g+h+x_2\partial_2 h$. This implies that $x_1\mid (h+x_2\partial_2 h)$ which contradicts $h\in \CC[x_0,x_2]$.
\end{proof}

\begin{proposition}\label{1st_syz_NF} Let $C=V(f)\subset \PP^2$ be a reduced nearly free curve of degree $d$.
Assume that there is a pair $(b,c)$ of non zero elements of $\CC$, such that $(0,bx_1,cx_2)\in \textrm{Syz}(J_f)_1$. 

Then a first and second syzygy matrices $M_f$ and $P_f$ in the minimal free resolution of $J_f$ (see \ref{eq: nearly free sequence}) are given by:

\begin{equation}\label{eq: nearly free sygygies}
M_f=
\begin{pmatrix}0& -\partial_1 f & -\partial_2 f \\
bx_1& \omega& 0\\
cx_2& -\frac{c}{b}gx_2 & \partial_0 f\\
\end{pmatrix}
\qquad \textrm{and }\qquad
P_f=
\begin{pmatrix}\omega\\
-bx_1\\
-cx_2
\end{pmatrix}.
\end{equation}
 \end{proposition}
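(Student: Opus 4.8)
The plan is to verify directly that the pair $(M_f,P_f)$ realizes the minimal free resolution \eqref{eq: nearly free sequence} of $J_f(d-1)$ with $r=\mathrm{mdr}(f)=1$. First I would record that the entries have the required degrees: the first column of $M_f$ is linear, while the remaining two columns consist of forms of degree $d-1$ (note $\partial_i f,\omega\in R_{d-1}$ and $gx_2\in R_{d-1}$), and the entries $\omega,-bx_1,-cx_2$ of $P_f$ place its image in degree $d$. This matches the maps $R(-1)\oplus R(-(d-1))^2\overset{M_f}{\to}R^3$ and $R(-d)\overset{P_f}{\to}R(-1)\oplus R(-(d-1))^2$.

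Next I would check that the three columns of $M_f$ lie in $\mathrm{Syz}(J_f)$, i.e. that $\nabla f$ annihilates each of them. Here I use the preceding Lemma, giving $h=0$ and hence $\partial_0 f=gx_1+\omega$, together with the hypothesis $bx_1\,\partial_1 f+cx_2\,\partial_2 f=0$. The third column gives $-\partial_2 f\,\partial_0 f+\partial_0 f\,\partial_2 f=0$ immediately. For the middle column, substituting $\omega=\partial_0 f-gx_1$ collapses the expression $-\partial_1 f\,\partial_0 f+\omega\,\partial_1 f-\tfrac{c}{b}gx_2\,\partial_2 f$ to $-\tfrac{g}{b}(bx_1\,\partial_1 f+cx_2\,\partial_2 f)=0$. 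I would then verify $M_f\cdot P_f=0$ from the same two facts, the only nontrivial row being the third, which reduces to $cx_2(\omega+gx_1-\partial_0 f)=0$. This shows that $(M_f,P_f)$ is a complex whose image of $M_f$ lies inside $\ker(\nabla f)=\mathrm{Syz}(J_f)(d-1)$ and with $\mathrm{im}(P_f)\subseteq\ker M_f$.

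The crux is to promote this complex to the minimal free resolution. Since $C$ is nearly free with $r=1$, the module $\mathrm{Syz}(J_f)$ has exactly three minimal generators, in degrees $1,d-1,d-1$. I would argue that the columns of $M_f$ form such a minimal system: the linear column spans the one-dimensional space $\mathrm{Syz}(J_f)_1$, and the two degree-$(d-1)$ columns are $\CC$-linearly independent modulo $R_{d-2}\cdot\mathrm{Syz}(J_f)_1$ (which consists of the syzygies with vanishing first coordinate), because their first coordinates $-\partial_1 f$ and $-\partial_2 f$ are linearly independent over $\CC$; indeed, if they were proportional the linear syzygy would force $\partial_2 f=0$, a contradiction. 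By Nakayama the three columns generate $\mathrm{Syz}(J_f)$, so $\mathrm{im}(M_f)=\ker(\nabla f)$ and $M_f$ is a minimal presentation. Consequently $\ker M_f$ is the first syzygy of $\mathrm{Syz}(J_f)(d-1)$, which by \eqref{eq: nearly free sequence} is free of rank one generated in degree $d$; as $P_f$ is a nonzero degree-$d$ element of the one-dimensional space $(\ker M_f)_d$, it generates $\ker M_f$, yielding exactness at the middle term and the injectivity of $P_f$. Hence $(M_f,P_f)$ is the minimal free resolution of $J_f$.

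I expect the main obstacle to be the generation step: showing that the two degree-$(d-1)$ columns are genuine minimal generators rather than combinations of multiples of the linear syzygy, and then invoking the exact Betti count of nearly free curves to conclude that the three columns generate the whole module. By contrast, the degree bookkeeping and the two syzygy identities are routine computations once $h=0$ has been used to write $\partial_0 f=gx_1+\omega$.
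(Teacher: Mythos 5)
The paper gives no proof of Proposition \ref{1st_syz_NF} at all: the statement is followed immediately by the examples of Section \ref{examples}, the authors evidently regarding it as a routine verification. So there is nothing to compare your argument against, and it must stand on its own --- which it does. Your two computational identities are correct: with $h=0$ from the preceding lemma one has $\partial_0 f = g x_1 + \omega$, each column of $M_f$ is then annihilated by $\nabla f$ (the middle one collapsing to $-\tfrac{g}{b}(bx_1\partial_1 f + cx_2\partial_2 f)=0$), and $M_f \cdot P_f = 0$. More importantly, you correctly identified and closed the genuinely non-routine point, which the paper leaves silent: a complex of syzygies of the right degrees need not be the minimal free resolution. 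Your use of the known Betti numbers of a nearly free curve with $r=1$ (generators in degrees $1, d-1, d-1$, one relation in degree $d$) is legitimate, since nearly freeness is a hypothesis rather than the conclusion; it yields $(\mathfrak{m}\,\mathrm{Syz}(J_f))_{d-1} = R_{d-2}\cdot \mathrm{Syz}(J_f)_1$, whose elements have vanishing first coordinate, so independence of the two degree-$(d-1)$ columns reduces to the $\CC$-independence of $\partial_1 f$ and $\partial_2 f$; that in turn holds because a dependence, combined with the syzygy $bx_1\partial_1 f + cx_2\partial_2 f = 0$ with $b,c\neq 0$, forces $\partial_1 f=\partial_2 f=0$, contradicting reducedness. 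Graded Nakayama then gives generation, hence exactness at $R^3$, and uniqueness of minimal free resolutions identifies $\ker M_f \cong R(-d)$, so the nonzero degree-$d$ element $P_f$ generates it. Two small cautions for the write-up: your parenthetical could be misread as asserting that \emph{every} syzygy with vanishing first coordinate lies in $R_{d-2}\cdot\mathrm{Syz}(J_f)_1$ --- only the (true) inclusion of $R_{d-2}\cdot\mathrm{Syz}(J_f)_1$ into that set is needed --- and you should record why $\mathrm{mdr}(f)=1$, namely that a curve with $\mathrm{mdr}(f)=0$ is a cone, hence free and not nearly free.
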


\section{Examples to illustrate Theorem 5.6}\label{examples}

We analyze in this section a series of examples of reduced plane curves with quasi-homogeneous and non quasi-homogeneous singularities to explicitly demonstrate how our criterion applies.

Observe that it applies also when the coordinates of the singular points are unknown. Indeed, it is enough to check if
the ideal generated by the entries of a first syzygy matrix defines the empty set or not, that is whether its saturation is the irrelevant ideal. Such a check is easily done, for instance, with Macaulay 2 \cite{M2}.

Concerning the nearly free conic-line arrangements with $r=1$, we have the following example (see, for instance, \cite{BMR} and \cite{D2} for more details).


\begin{example}
Let ${\mathcal C}_2$ be a conic arrangement with $m$ conics 
 $C_1,\cdots, C_m$ such that there exist two points $p, q\in \PP^2$ satisfying $C_i\cap C_j=\{p, q\}$ and the two intersection points $p, q$ are tacnodes for $C_i\cup C_j$, for all $i,j$, $1\le i<j\le m$. 

 A similar example in odd degree is given by the conic-line arrangement ${\mathcal C}{\mathcal L}_6$, given by $m$ conics $C_1,\cdots, C_m$ and a line $\ell$ such that there exist two points $p, q\in \PP^2$ such that $C_i\cap C_j=\{p, q\}$ and the two intersection points $p, q$ are tacnodes for $C_i\cup C_j$, for all $i,j$, $1\le i<j\le m$, and $\ell $ is the line through $p$ and $q$.

 Without loss of generality, we can assume that $p=(0:0:1)$, $q=(1:0:0)$ so that the equations of the two curves are given by:
 
\begin{center}
    ${\mathcal C}_2: 
\ f=\prod _{i=1}^m(x_0x_2 + a_ix_1^2) = 0$\hspace{1cm} and
\hspace{1cm}
${\mathcal C}{\mathcal L}_6: 
\ f=x_1\prod _{i=1}^m(x_0x_2 + a_ix_1^2) = 0,
$
\end{center}

\noindent with $a_i\ne 0$ for all $i$, $1\le i \le m$, and $a_i \neq a_j$ if $i \neq j$.

By \cite[Example 3.7]{BC}, for these curves ${\rm Syz}(J_f)_1$ is generated by $(x_0,0,-x_2)$ and they are not free. Since $r=1$, we have $\tau(C)=(d-1)(d-2)$ and they are nearly free by \cite[Lemma 3.4]{BMR}. By Proposition \ref{1st_syz_NF}, in both cases we have $\partial_{1} f\not\in(x_0,x_2)$. It follows that $\partial_1 f=gx_2+\omega$ with $g\in R_{d-2}$ and $\omega\in \mathbb{C}[x_1]_{d-1}$, thus 

\begin{equation}\label{eq: Hilbert Burch C2/CL6}
M_f=
\begin{pmatrix}x_0& \omega & 0 \\
0& -\partial_0 f & \partial_2 f\\
-x_2& gx_2 & -\partial_1 f\\
\end{pmatrix}.
\end{equation}

Therefore $\text {rk}~ M_f(p)\geq 1$ and $\text {rk}~ M_f(q)\geq 1$ and the singularities are quasi-homogeneous as was proved in \cite[Theorem 1.5]{D2}.

 \end{example}

\begin{example}
 Consider the following conic-line arrangement 
$$
C=V(x_0(x_0x_1-x_1^2+x_2^2)(x_0x_2-x_0^2+x_1^2-x_2^2)(x_0x_1+x_1^2-x_2^2)(x_0x_2+x_0^2-x_1^2+x_2^2)),
$$
\noindent which singularities are given by $p_1=(1:0:0),~p_2=(1:-1:0),~p_3=(1:1:0),~p_4=(0:-1:1)\textrm{ and } p_5=(0:1:1)$.

\begin{center}\begin{tikzpicture}[line cap=round,line join=round,>=triangle 45,x=1cm,y=1cm]
\clip(-1.6234528862232285,-2.102782204314926) rectangle (1.532151291942132,2.042357111201054);
\draw [line width=1.2pt] (0,-2.102782204314926) -- (0,2.042357111201054);
\draw [rotate around={-135:(0,0)},line width=1.2pt] (0,0) ellipse (1.4142135623730951cm and 0.816496580927726cm);
\draw [samples=50,rotate around={-270:(1,0)},xshift=1cm,yshift=0cm,line width=1.2pt,domain=-3:3)] plot (\x,{(\x)^2/2/0.5});
\draw [samples=50,rotate around={-90:(-1,0)},xshift=-1cm,yshift=0cm,line width=1.2pt,domain=-3:3)] plot (\x,{(\x)^2/2/0.5});
\draw [rotate around={-45:(0,0)},line width=1.2pt] (0,0) ellipse (1.4142135623730951cm and 0.816496580927726cm);
\begin{scriptsize}
\draw [fill=ttqqqq] (0,-1) circle (2.5pt);
\draw[color=ttqqqq] (0.26,-1.45) node {$p_4$};
\draw [fill=ttqqqq] (0,1) circle (2.5pt);
\draw[color=ttqqqq] (0.26,1.45) node {$p_5$};
\end{scriptsize}
\end{tikzpicture}
\end{center}

Computing the minimal free $R$-resolution of the Jacobian ideal of $C$ one has:
$$
0 \longrightarrow R(-14) \longrightarrow R(-13)^2\oplus R(-12)\overset {M_f}{\longrightarrow} R(-8)^3 \longrightarrow J_f \longrightarrow 0,
$$
Thus $C$ is nearly free with ${\rm mdr}(f)=4$, $\deg(J_f)=\tau(C)=47$, and $M_f$ is given by
\begin{equation}\label{eq: 2 NQH}
M_f=\begin{pmatrix} 
A_0 &B_0 &C_0\\
A_1 &B_1 &C_1\\
A_2 &B_2 &C_2\\
\end{pmatrix} ,
 \end{equation}
 
\noindent with 
$$\begin{array}{rcl}
  A_0 & = & 2x_0^3x_1-8x_0x_1^3+8x_0x_1x_2^2, \\
  B_0 & = & 4x_0^5-16x_0^3x_1^2+6x_0^3x_2^2-8x_0x_1^2x_2^2+8x_0x_2^4, \\
  C_0 & = & -16x_0^3x_1x_2+16x_0x_1^3x_2-16x_0x_1x_2^3,\\
  A_1 & = & -7x_0^2x_1^2+x_1^4+26x_1^2x_2^2-27x_2^4,\\
  B_1 & = & -14x_0^4x_1+2x_0^2x_1^3+6x_0^2x_1x_2^2+28x_1^3x_2^2-28x_1x_2^4, \\
  C_1 & = & -25x_0^2x_1^2x_2+25x_1^4x_2+54x_0^2x_2^3-106x_1^2x_2^3+81x_2^5, \\
  A_2 & = & -16x_0^2x_1x_2+28x_1^3x_2-28x_1x_2^3, \\
  B_2 & = & -5x_0^4x_2+2x_0^2x_1^2x_2+27x_1^4x_2-3x_0^2x_2^3-26x_1^2x_2^3-x_2^5, \text{ and }\\
  C_2 & = & 27x_0^4x_1-54x_0^2x_1^3+27x_1^5+101x_0^2x_1x_2^2-110x_1^3x_2^2+83x_1x_2^4.
\end{array}
$$  

With the help of Macaulay 2, we can check that $\text {rk}~ M_f(p_1)=\text {rk}~ M_f(p_2)=\text {rk}~ M_f(p_3)=1$ and these points are quasi-homogeneous singularities, meanwhile $\text {rk}~ M_f(p_4)=\text {rk}~ M_f(p_5)=0$, and these points are not quasi-homogeneous.

\vskip 2mm
Next we present another example of a plane curve with multiple non quasi-homogeneous singularities.
\end{example}
\begin{example}
  \label{5non QH} Consider the following conic-line arrangement given by one smooth conic and ten lines built from \cite[Example 1.7]{ST}:

\begin{center}
\begin{tikzpicture}
\filldraw [black] (0,0) circle [radius=2pt]
(0,1) circle [radius=2pt]
(1,2) circle [radius=2pt]
(2,1) circle [radius=2pt]
(1,0) circle [radius=2pt];

\draw (0,-0.5) -- (0,1.5);
 \draw (-0.5,0) -- (2.5,0);
 \draw (-0.3,-0.6) -- (1.2,2.4);
 \draw (-0.6,-0.3) -- (2.4,1.2);
 \draw (1,-0.5) -- (1,2.5);
 \draw (-0.5,1) -- (2.5,1);
 \draw (-0.5,0.5) -- (1.5,2.5);
 \draw (0.5,-0.5) -- (2.5,1.5);
 \draw (-0.5,1.5) -- (1.5,-0.5);
 \draw (0.5,2.5) -- (2.5,0.5);
\draw[xshift=28.3465pt,yshift=28.3465pt,rotate=45,thick] (0,0) ellipse (1.4 and 0.8);
 \end{tikzpicture}
\end{center}

More precisely, we take 
\[
\begin{array}{rr}
& C=V(x_0x_1(x_0-2x_1)(x_1-2x_0)(x_0-x_2)(x_1-x_2)(x_0-x_1+x_2)(x_0-x_1-x_2) \\
& (x_0+x_1-x_2)
 (x_0+x_1-3x_2)(x_0^2+x_1^2-x_0x_1-x_0x_2-x_1x_2)).
\end{array}
\]

  The curve $C$ has twenty singular points, five of them $(0:0:1),(0:1:1),(1:2:1),(2:1:1),(1:0:1)$ are given by the intersection of the conic with four lines, and the other fifteen are given by the intersection of two lines, therefore all the singularities are ordinary and the Milnor number can be easily computed. One has $\mu_p(C)=16$ in each of the five singular points on the conic and $\mu_p(C)=1$ in the other fifteen. Thus the total Milnor number $\mu(C)=95$.

 Again with the help of Macaulay 2, we can compute the minimal free $R$-resolution of the Jacobian ideal of $C$, given by:
$$0 \longrightarrow R(-18) \longrightarrow R(-17)^3 \overset {M_f}{\longrightarrow} R(-11)^3 \longrightarrow J_f \longrightarrow 0,$$
thus $C$ is nearly free with ${\rm mdr}(f)=6$, $\deg(J_f)=\tau(C)=90$.
 
 Since $\tau(C)<\mu(C)$ we know that at least one of the singularities is non quasi-homogeneous. Indeed, $\text {rk}~ M_f(p)=0$ for the five singular points on the conic, thus all of them are non quasi-homogeneous singularities. For the other fifteen as expected $\text {rk}~ M_f(p)=1$, and they are quasi-homogeneous singularities.

\end{example}

\section{Final comments}
In this short last section, we state a natural question and open problem stemming from our 
work.

Consider a reduced plane curve $C=V(f)$ of degree $d$ and a minimal free resolution of its Jacobian ideal:
$$
0\longrightarrow \bigoplus_{j=1}^{m-2} R(-e_j)\overset {P_f}{\longrightarrow} \bigoplus _{i=1}^mR(-d_i) \overset {M_f}{\longrightarrow} R^3\longrightarrow J_f(d-1) \longrightarrow 0.
$$
The matrix $M_f$ will be called a {\em first syzygy matrix} of the Jacobian ideal $J_f$ of $f$. We ask: 
\begin{question}
 Can we characterize the quasi-homogeneous singular points of $C$ in terms of $M_f$?
\end{question}

In the particular case of 3-syzygy curves, that is
curves with a syzygy matrix $M_f$ a square matrix of size $3$, by the evidence given by several examples that we have tested, we state the following conjecture:
\begin{conjecture}\label{conj}
Let $C=V(f)$ be a reduced 3-syzygy plane curve with a
first syzygy matrix $M_f$. A point $p\in Sing( C )$ is a quasi-homogeneous isolated singularity if and only if $\textrm{rk}~ M_f(p)\ge 1$.
\end{conjecture}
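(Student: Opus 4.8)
The plan is to obtain Theorem \ref{main2} as a chain of three equivalences,
$$\mu(C)=\tau(C)\;\Longleftrightarrow\;\text{every singularity of }C\text{ is quasi-homogeneous}\;\Longleftrightarrow\;Z_f\text{ irreducible}\;\Longleftrightarrow\;\mathrm{rk}\,M_f(p)\ge1\ \forall p,$$
and then to read off the theorem. First I would reduce the global numerical equality to a pointwise one. Since $\tau(C)=\sum_{p\in\mathrm{Sing}(C)}\tau_p(C)$ and $\mu(C)=\sum_{p\in\mathrm{Sing}(C)}\mu_p(C)$ with $\tau_p(C)\le\mu_p(C)$ at every $p$ by Definition \ref{def_tau_mu}, the equality $\mu(C)=\tau(C)$ holds iff $\tau_p(C)=\mu_p(C)$ for each $p$; by Saito's criterion \cite{S} this says exactly that every singular point is quasi-homogeneous. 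So it is enough to characterize $\mu(C)=\tau(C)$ through $M_f$.

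Next I would invoke the Chow-class computation already available for nearly free curves. The surface $S_f=\overline{\Gamma}_{\nabla f}$ has $h_1^2$-coefficient $\deg\nabla f=(d-1)^2-\mu(C)$, while $Z_f$ has $h_1^2$-coefficient $r(d-1-r)+1=(d-1)^2-\tau(C)$. Because $S_f\subseteq Z_f$ and the two surfaces coincide over $\PP^2\setminus\Sigma_f$, the effective cycle $Z_f-S_f$ is supported on vertical fibers over singular points, each of class $h_1^2$; comparing the $h_1^2$-coefficients shows $Z_f=S_f$, equivalently $Z_f$ irreducible, iff $(d-1)^2-\mu(C)=(d-1)^2-\tau(C)$, i.e. iff $\mu(C)=\tau(C)$. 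This is precisely the content of the Corollary preceding the theorem, which I would simply cite.

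Finally I would translate the irreducibility of $Z_f$ into the rank condition. A vertical plane $p_1^{-1}(p)=\{p\}\times\PP^2$ lies in $Z_f$ exactly when the three linear forms $\sum_i A_i(p)y_i,\ \sum_i B_i(p)y_i,\ \sum_i C_i(p)y_i$ vanish identically, that is when every entry of $M_f(p)$ is zero, i.e. $\mathrm{rk}\,M_f(p)=0$. Hence $Z_f$ has no vertical component, so is irreducible, iff $\mathrm{rk}\,M_f(p)\ge1$ for all $p\in\Sigma_f$. Moreover the $2\times2$ minors of any two columns of $M_f$ recover $\nabla f$ up to a scalar factor (Euler's formula applied to the Jacobian relations encoded by the rows of $M_f$), so $M_f(p)=0$ forces $\nabla f(p)=0$ and the rank-zero locus already sits inside $\Sigma_f=\mathrm{Sing}(C)$; therefore ``$\mathrm{rk}\,M_f(p)\ge1$ for all $p\in\Sigma_f$'' is the same as ``$\mathrm{rk}\,M_f(p)\ge1$ for all $p\in\PP^2$'', and concatenating the equivalences yields the theorem.

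The hard part is not this final chain, which is formal, but the inputs it consumes: the exact determination of the class of $Z_f$, which rests on showing in Lemma \ref{E_splits} that the rank-two syzygy sheaf $\sE$ of Lemma \ref{lemma: sections of E} splits as a direct sum of line bundles, via the liaison and mapping-cone argument. The subtle point I would double-check is that every component of $Z_f$ other than $S_f$ is genuinely a vertical plane of class $h_1^2$, so that the coefficient comparison is exact with no hidden multiplicity; this is ensured by the fact that $S_f$ and $Z_f$ agree away from the finite scheme $\Sigma_f$, forcing all extra components to be fibers of $p_1$ over points of $\mathrm{Sing}(C)$. Once the class of $Z_f$ is pinned down with $h_1^2$-coefficient $(d-1)^2-\tau(C)$, Theorem \ref{main2} follows by the direct comparison above.
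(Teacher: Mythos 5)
You have proved the wrong statement. The statement in question is Conjecture \ref{conj}, which concerns \emph{arbitrary} reduced $3$-syzygy curves, and the paper does not prove it: it is explicitly left open, supported only by numerical experiments such as Example \ref{3syzygy}. Your argument instead reassembles the paper's proof of Theorem \ref{main2}, which covers only the nearly free case, i.e.\ the special $3$-syzygy curves with exponents $(d_1,d_2,d_2)$, $d_1+d_2=d$. Every nontrivial input your chain of equivalences consumes is established in the paper only under that hypothesis. Concretely: the identification of the $h_1^2$-coefficient of $Z_f$ with $(d-1)^2-\tau(C)$ uses the nearly free value $\tau(C)=(d-1)(d-1-r)+r^2-1$ from Proposition \ref{bounds_tau} (valid for $d\ge 2r$), and the determination of the class of $Z_f$ rests on Lemma \ref{E_splits}, whose liaison and mapping-cone argument is tailored to the nearly free resolution \eqref{eq: nearly free sequence}: there the second syzygy matrix $P_f$ has two \emph{linear} entries $P_1,P_2\in R_1$, which is what produces the minimal section of $\sE((d-r+1)h_1+h_2)$, hence $h^0(\sI_{S_2,\PP^2\times\PP^2}(h_1))\neq 0$ for the linked surface $S_2$, hence the bound on the class of $S_2$ that forces $\sE$ to split. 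For a general $3$-syzygy curve with exponents $d_1\le d_2\le d_3$ the entries of the second syzygy have degrees $e-d_i$, typically at least $2$; the residual surface then need not lie in a divisor of class $h_1$, the splitting of $\sE$ is not known, and the $h_1^2$-coefficient of $Z_f$ --- which would have to come out equal to $(d-1)^2-\tau(C)$, where $\tau(C)$ now obeys a different formula --- is precisely what nobody has computed. That missing class computation \emph{is} the open content of the conjecture, so your proposal establishes nothing beyond what Theorem \ref{main2} already gives.

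A secondary gap, independent of the scope issue: the conjecture is pointwise ($p$ is quasi-homogeneous if and only if ${\rm rk}\, M_f(p)\ge 1$), whereas your chain of equivalences, like Theorems \ref{main1} and \ref{main2} as stated, is global ($\mu(C)=\tau(C)$ iff the rank is positive everywhere). The corollaries you cite only give the decomposition $Z_f=S_f+\sum_i m_i\, p_1^{-1}(P_i)$ with $\{P_1,\dots,P_s\}\subseteq{\rm Sing}\,C$ and $\sum_i m_i=\mu(C)-\tau(C)$; to obtain the per-point claim one must additionally show that a vertical plane sits over $p$ exactly when $\mu_p(C)>\tau_p(C)$, which requires a local comparison near each singular point that your writeup (and, strictly speaking, the global formulation of the paper's theorems) does not carry out.
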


We end with an example that supports our conjecture.

\begin{example}\label{3syzygy} Consider the rational irreducible plane curve $$C =V(f) =V(x_0^d + (x_0^2 + x_1^2)^kx_2),$$
with $d=2k+1$ odd, $k \geq 2$. According to \cite[Example 4.4]{DS6} $C$ is a $3$-syzygy curve with exponents $(2, d-2, d-1)$, only one singular point $p=(0:0:1)$, with $\mu_p(C)= d^2-3d+1$, $\tau_p(C)=d^2-4d+5$, thus $p$ is non quasi-homogeneous. And moreover, if $k$ is odd, one has:
\begin{equation}\label{eq: Hilbert Burch 3-syz}
M_f=
\begin{pmatrix}
0& 2kx_2(x_0^2+x_1^2)^{k-1} & (x_0^2+x_1^2)^{k} \\
(x_0^2+x_1^2)& (2k+1)x_1^{2k-1} & 0\\
-2kx_1x_2&-B & -((2k+1)x_0^{2k}+2kx_0x_2(x_0^2+x_1^2)^{k-1})\\
\end{pmatrix},
\end{equation}

with $$B=2k(2k+1)x_2(x_0^{2k-2}-x_0^{2k-4}x_1^2+ \cdots -x_0^2x_1^{2k-4}+4k^2x_0x_2^2(x_0^2+x_1^2)^{k-2}).$$
\end{example}
Therefore, $M_f(p)=0$ as we conjectured. The case $k$ even is analogous.

\end{document}